\newcommand{\largtt}[1]{{\large\texttt{#1}}}
\newtheorem{prop}{Proposition}[section]
\newtheorem{rmq}{Remark}[section]  
\newtheorem{theo}{Theorem}[section]  
\newtheorem{lem}{Lemma}[section]
\newtheorem{cor}{Corollary}[section]
\DeclareMathOperator{\dom}{dom}
\DeclareMathOperator{\proj}{proj}
\newcommand*{\nrm}[1]{\left\| #1 \right\|}      
\newcommand*{\E}{\mathbb{E}}
\newcommand*{\EE}[1]{\E\left[#1\right]}
\newcommand{\limite}[2]{\xrightarrow[#1]{#2}}
\newcommand{\CL}{\mathcal{L}}
\DeclareMathOperator{\cvl}{\limite{n \to \infty}{\CL}}
\DeclareMathOperator{\cste}{cste}
\DeclareMathOperator{\var}{Var}
\DeclareMathOperator{\dist}{dist}
\DeclareMathOperator{\eqm}{eqm}
\DeclareMathOperator{\cod}{dom}
\begin{document}
\title{Online estimation of the geometric median in Hilbert spaces : non asymptotic confidence balls}
\author{Herv\'e \textsc{Cardot}, Peggy \textsc{C\'enac}, Antoine \textsc{Godichon} \\ Institut de Math\'ematiques de Bourgogne, Universit\'e de Bourgogne, \\
9 Rue Alain Savary, 21078 Dijon, France \\
email: \{herve.cardot, peggy.cenac, antoine.godichon\}@u-bourgogne.fr
} 
\maketitle
\begin{abstract}

Estimation procedures based on recursive algorithms are interesting and powerful techniques that are able to deal rapidly with (very) large samples of high dimensional data. The collected data may be contaminated by noise so that robust location indicators, such as the geometric median, may be preferred to the mean. In this context, an estimator of the geometric median based on a  fast and efficient averaged non linear stochastic gradient algorithm has been developed by \cite{HC}. This work aims at studying more precisely the non asymptotic behavior of this algorithm by giving non asymptotic confidence balls. This new result is based on the derivation of improved $L^2$ rates of convergence as well as an exponential inequality for the martingale terms of the recursive non linear Robbins-Monro algorithm. 
     
\end{abstract}

\noindent \textbf{Keywords} : Functional Data Analysis, Martingales in Hilbert space, Recursive Estimation, Robust Statistics,  Spatial Median, Stochastic Gradient Algorithms.
\section{Introduction}

Dealing with large samples of observations taking values in high dimensional spaces such as functional spaces is not unusual nowadays. In this context, simple estimators of location such as the arithmetic mean can be greatly influenced by a small number of outlying values.  Thus, robust indicators of location may be preferred to the mean.
We focus in this work on the estimation of the geometric median, also called $L^{1}$-median or spatial median. It is a generalization of the real median introduced by \cite{Hal48}  that can now be computed rapidly, even for large samples in high dimension spaces, thanks to recursive algorithms (see \cite{HC}). 

Let $H$ be a separable Hilbert space, we denote by $\langle . , . \rangle$ its inner product and by $\| . \|$ the associated norm. Let $X$ be a random variable taking values in $H$, the geometric median $m$ of $X$ is defined by:
\begin{equation}
\label{defi}m:= \arg \min_{h\in H} \mathbb{E}\left[ \| X-h \| - \| X \| \right].
\end{equation}

Many properties of this  median in separable Banach spaces are given by \cite{Kem87} such as  existence and uniqueness, as well as robustness (see also the review   \cite{Sma90}).  Recently, this median has received much attention in the literature.  For example, \cite{minsker2013geometric} suggests to consider, in various statistical contexts, the geometric  median of independent estimators to obtain much tighter concentration bounds. In functional data analysis, \cite{KrausPanaretos2012} consider resistant estimators of the covariance operators based on the geometric median in order to derive a robust test of  equality of the second-order structure for two samples. The geometric median is also chosen to be the central location indicator in various types of robust functional principal components analyses (see   \cite{LMSTZC1999}, \cite{Ger08} and \cite{BBTW2011}).  Finally, a general definition of the geometric median on manifolds is given in \cite{ADPY10} with signal processing issues in mind.

Consider a sequence of i.i.d copies $X_1, X_2, \ldots, X_n, \ldots $ of $X$. A natural estimator $\widehat{m}_n$ of $m$, based on $X_1, \ldots, X_n$,  is obtained by  minimizing the empirical risk
\begin{equation}
\label{defi-emp}
\widehat{m}_n := \arg \min_{h\in H} \sum_{i=1}^n \left[ \| X_i-h \| - \| X_i \| \right].
\end{equation}
Convergence properties of  the empirical estimator $\widehat{m}_n$ are reviewed in \cite{MNO2010} when the dimension of $H$ is finite whereas the recent work of \cite{chakraborty2014spatial} proposes a deep asymptotic study for random variables taking values in separable Banach spaces. 

Given a sample $X_1, \ldots, X_n$,  the computation of $\widehat{m}_n$ generally relies on a variant of the Weiszfeld's algorithm  (see {\it e.g.} \cite{kuhn1973note}) introduced by  \cite{VZ00}. This iterative algorithm is relatively fast  (see \cite{beck2013weiszfeld} for an improved version) but it is not adapted to handle very large data sets of high-dimensional data since it requires to store all the data. However huge datasets are not unusual anymore with the development of automatic sensors and smart meters. In this context  \cite{HC} have developed a much faster recursive algorithm, which does not require to store all the data and can be updated automatically when the data arrive online. The estimation procedure is based on the simple following recursive scheme, 
\begin{equation}\label{algo}
Z_{n+1} = Z_{n}+\gamma_{n}\frac{X_{n+1}-Z_{n}}{\nrm{X_{n+1}-Z_{n}}} 
\end{equation}
where the  sequence of steps $\left( \gamma_{n} \right)$ controls the convergence of the algorithm and satisfy the usual conditions for the convergence of Robbins Monro algorithms (see Section~\ref{sectionrateofconvergence}). The averaged version of the algorithm is given by
\begin{align}
\overline{Z}_{n+1} &=\overline{Z}_{n}+\frac{1}{n+1}\left( Z_{n+1}-\overline{Z}_{n}\right),
\label{algo:moy}
\end{align}
with $\overline{Z}_{0}=0$, so that $\overline{Z}_{n}=\frac{1}{n}\sum_{i=1}^{n}Z_{i}$.  
The averaging step described in (\ref{algo:moy}), and first studied in \cite{PolyakJud92}, allows a considerable improvement of  the convergence of the initial Robbins-Monro algorithm. It is shown in \cite{HC} that the recursive averaged estimator $\overline{Z}_n$ and the empirical estimator $\widehat{m}_n$   have the same Gaussian limiting distribution. In infinite dimensional spaces, this nice result heavily relies on the (locally) strong convex properties of the objective function to be minimized. Note that  \cite{bach2014adaptivity} adopts  an analogous recursive point of view for logistic regression under slightly different conditions, called self-concordance, which involve uniform conditions on the third order derivatives of the objective function.

\medskip

The aim of this work is to give new arguments in favor of the averaged stochastic gradient algorithm 
by providing a sharp control of its deviations around the true median, for finite samples. To get such non asymptotic confidence balls, new results about the behavior of the stochastic algorithm are proved : improved convergence rates in quadratic mean compared to those obtained in \cite{HC}  as well as new exponential inequalities for "near" martingale sequences in Hilbert spaces, similar to the seminal result of \cite{Pinelis} for martingales. Note that, as far as we  know, there are only very few results in the  literature on exponential bounds for non linear recursive algorithms (see however \cite{balsubramani2013fast} for recursive PCA).

\medskip

The paper is organized as follows. Section~\ref{sectiondefi} recalls some convexity properties of the geometric median as well as the basic assumptions ensuring the uniqueness of the geometric median. 
In Section~\ref{sectionrateofconvergence}, the rates of convergence of the stochastic gradient algorithm are derived in quadratic mean as well as in $L^4$. 
In Section~\ref{sectioninterv},   an exponential inequality is derived borrowing ideas from \cite{Tarres}. It enables us to build non asymptotic confidence balls for the Robbins-Monro algorithm as well as  its averaged version. All the proofs are gathered in Section~\ref{sectionproof}.

\section{Assumptions on the median and convexity properties}\label{sectiondefi}
Let us first state basic assumptions on the median.
 \begin{itemize}
\item[\textbf{(A1)}] The random variable $X$ is not concentrated on a straight line: for all $h\in H$, there exists $h' \in H$ such that $\langle h,h' \rangle =0$ and
\[
\var \left( \langle h',X \rangle \right) >0.
\]
\item[\textbf{(A2)}] $X$ is not concentrated around single points: there is a constant $C>0$ such that for all $h \in H$:
\[
\mathbb{E}\left[ \| X-h \|^{-1} \right] \leq C. 
\] 
\end{itemize}
Assumption \textbf{(A1)} ensures that the median $m$ is uniquely defined \citep{Kem87}. Assumption \textbf{(A2)} is closely related to small ball probabilities and to the dimension of $H$. It was proved in \cite{Cha92} that  when $H= \mathbb{R}^{d}$, assumption \textbf{(A2)} is satisfied when  $d \geq 2$ under classical assumptions on the density of $X$. A detailed discussion on assumption \textbf{(A2)} and its connection with small balls probabilities can be found in \cite{HC}.

We now recall some results about convexity and robustness of the geometric median. We denote by $G:H \longrightarrow \mathbb{R}$ the convex function we would like to minimize, defined for all $h\in H$ by
\begin{equation}
\label{defiG}G(h):= \mathbb{E} \left[ \| X-h \| -\| X\| \right]. 
\end{equation}
This function is Fr\'echet differentiable on $H$, we denote by $\Phi$ its Fr\'echet derivative, and for all $h \in H$:
\[\Phi (h):= \nabla_{h}G=-\mathbb{E}\left[ \frac{X-h}{\| X-h \|} \right].\]
Under previous assumptions,  $m$ is the unique zero of $\Phi$.

Let us define $U_{n+1}:= -\frac{X_{n+1}-Z_{n}}{\| X_{n+1}-Z_{n} \|}$ and let us introduce the sequence of $\sigma$-algebra $\mathcal{F}_{n}:=\sigma \left( Z_{1},...,Z_{n} \right) = \sigma \left(X_{1},...,X_{n}\right)$. For all integer $n \geq 1$,
\begin{align}
\mathbb{E}\left[ U_{n+1}| \mathcal{F}_{n} \right] &= \Phi (Z_{n}).
\end{align}
The sequence $\left( \xi_{n} \right)_n$ defined by $\xi_{n+1}:=\Phi(Z_{n})-U_{n+1}$ is a martingale difference sequence with respect to the filtration $\left( \mathcal{F}_{n} \right)$. Moreover, we have for all $n$, $\| \xi_{n+1} \| \leq 2$ and
\begin{align}
\label{majxi1} 
\mathbb{E}\left[ \| \xi_{n+1}\|^{2} |\mathcal{F}_{n} \right] &\leq 1- \| \Phi (Z_{n}) \|^{2} \leq 1 .
\end{align}
Algorithm (\ref{algo}) can be written as a Robbins-Monro or  a stochastic gradient algorithm:
\begin{align}
\label{decphi} Z_{n+1}-m &= Z_{n}-m-\gamma_{n}\Phi (Z_{n})+\gamma_{n}\xi_{n+1}.
\end{align}

We now consider the Hessian of $G$,
 which is denoted  by $\Gamma_{h}:H \longrightarrow H$. It satisfies (see \cite{Ger08})
\[\Gamma_{h}=\mathbb{E}\left[ \frac{1}{\| X-h \|}\left( I_{H}-\frac{(X-h)\otimes (X-h)}{\| X-h \|^{2}}\right) \right],\]
where $I_{H}$ is the identity operator in $H$ and $u \otimes v(h)=\langle u,h \rangle v$ for all $u,v,h \in H$. 
The following (local) strong convexity properties will be useful (see  \cite{HC} for proofs).
\begin{prop}[ \cite{HC} ]\label{propgamma}
Under assumptions \textbf{(A1)} and \textbf{(A2)}, for any real number $A>0$, there is a  positive constant $c_{A}$ such that for all $h\in H$ with $\| h \| \leq A$, and for all $h'\in H$:
\[
 c_{A}\| h' \|^{2} \leq \langle h' , \Gamma_{h}h' \rangle \leq C \| h' \|^{2} .
\]
As a particular case, there is a positive constant $c_{m}$ such that for all $h'\in H$:
\begin{equation}
\label{encgamma} c_{m} \| h' \|^{2} \leq \langle h',\Gamma_{m}h' \rangle \leq C \| h' \|^{2}.
\end{equation}
\end{prop}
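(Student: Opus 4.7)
The plan is to rewrite the quadratic form geometrically and attack the two inequalities separately. For any $h,h'\in H$, direct expansion of the definition of $\Gamma_h$ yields
$$
\langle h', \Gamma_h h' \rangle \;=\; \mathbb{E}\!\left[\frac{1}{\|X-h\|}\left(\|h'\|^{2}-\frac{\langle X-h,h'\rangle^{2}}{\|X-h\|^{2}}\right)\right],
$$
and by Cauchy--Schwarz the parenthesis lies in $[0,\|h'\|^{2}]$. The \textbf{upper bound} then drops out immediately: $\langle h',\Gamma_h h'\rangle\le \|h'\|^{2}\,\mathbb{E}[\|X-h\|^{-1}]\le C\|h'\|^{2}$ by assumption \textbf{(A2)}, with a constant $C$ that does not even depend on $A$.

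For the \textbf{lower bound}, homogeneity reduces matters to $\|h'\|=1$, for which the parenthesis becomes $\|P_{h'}^{\perp}(X-h)\|^{2}/\|X-h\|^{2}$ with $P_{h'}^{\perp}:=I_H-h'\otimes h'$. For parameters $B,\varepsilon>0$ to be chosen later I would truncate to the event $E_{B,\varepsilon}:=\{\|X-h\|\le B\}\cap\{\|P_{h'}^{\perp}(X-h)\|\ge\varepsilon\}$, on which the integrand is at least $\varepsilon^{2}/B^{3}$, giving
$$
\langle h', \Gamma_h h'\rangle \;\ge\; \frac{\varepsilon^{2}}{B^{3}}\,\mathbb{P}(E_{B,\varepsilon}).
$$
It therefore suffices to exhibit $B,\varepsilon,p_{0}>0$, depending only on $A$ and the law of $X$, such that $\mathbb{P}(E_{B,\varepsilon})\ge p_{0}$ uniformly in $\|h\|\le A$ and $\|h'\|=1$. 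The first factor is easy: since $\|X-h\|\le\|X\|+A$, choosing $B:=A+B_{0}$ with $B_{0}$ large enough so that $\mathbb{P}(\|X\|\le B_{0})\ge 3/4$ yields $\mathbb{P}(\|X-h\|\le B)\ge 3/4$ throughout $\|h\|\le A$.

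\textbf{The main obstacle} is to bound $\mathbb{P}(\|P_{h'}^{\perp}(X-h)\|\ge\varepsilon)$ away from zero uniformly in $(h,h')$: geometrically, one must forbid $X-h$ from concentrating in an arbitrarily thin tube around the line $\mathbb{R}h'$. Pointwise in $(h,h')$ this is guaranteed by \textbf{(A1)}, but since the unit sphere of $H$ is not compact when $\dim H=\infty$ the uniformity is not automatic. My plan is a contradiction argument: if no uniform $\varepsilon,p_{0}$ worked, one could extract sequences $h_n,h_n',\varepsilon_n\downarrow 0$ with $\mathbb{P}(\|P_{h_n'}^{\perp}(X-h_n)\|<\varepsilon_n)\to 1$; using Banach--Alaoglu to pass to weak limits $h_n\rightharpoonup h_\infty$ and $h_n'\rightharpoonup h_\infty'$, combined with dominated convergence justified by \textbf{(A2)}, one would conclude that $X-h_\infty$ is almost surely proportional to $h_\infty'$, contradicting \textbf{(A1)} when $h_\infty'\neq 0$; the degenerate possibility $h_\infty'=0$, peculiar to infinite dimension, must be ruled out by a finer bookkeeping that exploits $\|h_n'\|=1$ together with \textbf{(A1)} applied along a second direction. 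Once such a $p_0$ is in hand, setting $c_A:=p_0\varepsilon^{2}/B^{3}$ closes the proof, and the estimate at the median is the special case $A=\|m\|$.
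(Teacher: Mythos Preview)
The paper does not give its own proof of this proposition: it is quoted from \cite{HC} and the reader is referred there for the argument, so there is no in-paper proof to compare against. I can only assess your attempt on its own terms.

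Your upper bound is correct and matches what anyone would write. For the lower bound, your overall strategy---truncate to a good event and then argue that its probability is bounded below uniformly in $(h,h')$---is reasonable, but the compactness step is not yet a proof. Two concrete issues. First, Banach--Alaoglu only gives you weak subsequential limits $h_n\rightharpoonup h_\infty$, and weak convergence does \emph{not} imply $\|X-h_n\|\to\|X-h_\infty\|$ or $P_{h_n'}^\perp(X-h_n)\to P_{h_\infty'}^\perp(X-h_\infty)$ pointwise; so ``dominated convergence justified by \textbf{(A2)}'' does not apply to the indicator of $E_{B,\varepsilon_n}$ as written, and the passage from $\mathbb{P}(\|P_{h_n'}^\perp(X-h_n)\|<\varepsilon_n)\to 1$ to ``$X-h_\infty$ is almost surely proportional to $h_\infty'$'' is not justified. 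Second, the degeneracy $h_\infty'=0$ that you flag is not a corner case to be handled by bookkeeping: in infinite dimension an orthonormal sequence $h_n'=e_n$ converges weakly to $0$, and nothing in your argument so far prevents this. You need either to work with a topology in which the relevant functional is lower semicontinuous and the constraint set is compact (and then check carefully that \textbf{(A1)} forces strict positivity at the minimiser), or to avoid compactness altogether---for instance by bounding $\|P_{h'}^\perp(X-h)\|^2\ge\langle X-h,h''\rangle^2$ for a suitable $h''\perp h'$ and producing a uniform lower bound from the covariance structure of $X$. As it stands the lower bound is a plan with a real gap, not a proof.
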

The following corollary recall some properties of the spectrum of the Hessian of $G$, in particular on the spectrum of $\Gamma_{m}$.
\begin{cor}\label{corgamma}
Under assumptions \textbf{(A1)} and \textbf{(A2)}, for all $h\in H$, there is an increasing sequence of non-negative eigenvalues $\left( \lambda_{j,h}\right)$ and an orthonormal basis $\left( v_{j,h}\right)$ of eigenvectors of $\Gamma_{h}$ such that
\begin{align*}
\Gamma_{h}v_{j,h} &  = \lambda_{j,h}v_{j,h}, \\
\sigma (\Gamma_{h} ) & =\left\lbrace \lambda_{j,h},j\in \mathbb{N}\right\rbrace , \\
\lambda_{j,h} & \leq C .
\end{align*}
Moreover, if $\| h \| \leq A$, for all $j \in \mathbb{N}$ we have $c_{A} \leq \lambda_{j,h} \leq C$. 

As a particular case, the  eigenvalues $ \lambda_{j,m} $  of $\Gamma_{m}$ satisfy,  $c_{m} \leq \lambda_{j,m} \leq C$, for all $j \in \mathbb{N}$.
\end{cor}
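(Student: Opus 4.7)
The plan is to exploit the explicit formula for $\Gamma_h$ by writing it as a multiple of the identity minus a compact correction, and then to apply the spectral theorem for compact self-adjoint operators. Decomposing the integrand as $\frac{1}{\|X-h\|} I_H - \frac{(X-h)\otimes(X-h)}{\|X-h\|^3}$ gives
\[
\Gamma_h \;=\; a_h\, I_H - K_h, \qquad a_h := \mathbb{E}\left[\frac{1}{\|X-h\|}\right], \quad K_h := \mathbb{E}\left[\frac{(X-h)\otimes(X-h)}{\|X-h\|^3}\right].
\]
By \textbf{(A2)} one has $a_h \leq C < \infty$. Each integrand defining $K_h$ is a positive, self-adjoint, rank-one operator of trace $1/\|X-h\|$; therefore $K_h$ is self-adjoint and positive, and for any orthonormal basis $(e_j)$ of $H$, Fubini's theorem yields $\sum_j \langle e_j, K_h e_j\rangle = \mathbb{E}[1/\|X-h\|] = a_h < \infty$. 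Hence $K_h$ is trace-class, in particular compact.

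The spectral theorem for compact positive self-adjoint operators then provides a non-increasing sequence of eigenvalues $\mu_{j,h} \geq 0$ with $\mu_{j,h} \to 0$ and an orthonormal basis $(v_{j,h})$ of eigenvectors of $K_h$ (completing an arbitrary orthonormal basis of $\ker K_h$ if necessary). Because $\Gamma_h - a_h I_H = -K_h$, the same vectors $v_{j,h}$ are eigenvectors of $\Gamma_h$ with eigenvalues $\lambda_{j,h} := a_h - \mu_{j,h}$, so the $\lambda_{j,h}$ form a non-decreasing sequence converging to $a_h$, and $\sigma(\Gamma_h) = \{\lambda_{j,h} : j \in \mathbb{N}\}$ (the spectrum of $-K_h$ equals its eigenvalues together with $0$, which is already absorbed in the list when $\ker K_h$ is non-trivial).

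Finally, the claimed bounds follow from Proposition~\ref{propgamma} by testing the quadratic form on unit eigenvectors: since $\lambda_{j,h} = \langle v_{j,h}, \Gamma_h v_{j,h}\rangle$, Proposition~\ref{propgamma} immediately gives $\lambda_{j,h} \leq C$ for every $h\in H$, and $c_A \leq \lambda_{j,h}$ as soon as $\|h\| \leq A$. Non-negativity of the $\lambda_{j,h}$ is a by-product of this same lower bound taken with $A = \|h\|$. The special case $h = m$ with $A = \|m\|$ and $c_A = c_m$ yields the announced bounds on $\sigma(\Gamma_m)$. I do not expect any serious obstacle: once the decomposition $\Gamma_h = a_h I_H - K_h$ is in place, the only mildly technical step is checking that $K_h$ is trace-class, which is a one-line application of Fubini together with \textbf{(A2)}.
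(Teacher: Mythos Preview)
Your argument is correct and in fact more complete than the paper's. The paper's own justification of Corollary~\ref{corgamma} is a single line: ``The bounds are an immediate consequence of Proposition~\ref{propgamma}.'' It takes the existence of an orthonormal eigenbasis for $\Gamma_h$ as granted and only explains the inequalities $c_A \leq \lambda_{j,h} \leq C$ by pointing back to the quadratic-form bounds.

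Your route differs in that you actually justify the spectral structure: writing $\Gamma_h = a_h I_H - K_h$ with $K_h$ trace-class (via \textbf{(A2)}) is exactly what is needed to guarantee an eigenbasis in infinite dimensions, since a bounded self-adjoint operator need not be diagonalizable in general. This is the natural argument and buys you the increasing ordering of the $\lambda_{j,h}$ and their accumulation at $a_h$ for free. The derivation of the eigenvalue bounds is then identical to the paper's, namely evaluating $\langle v_{j,h}, \Gamma_h v_{j,h}\rangle$ and invoking Proposition~\ref{propgamma}. One very minor quibble: the claim $\sigma(\Gamma_h) = \{\lambda_{j,h} : j \in \mathbb{N}\}$ should strictly be the closure of that set (the limit point $a_h$ belongs to the spectrum but may fail to be an eigenvalue when $\ker K_h = \{0\}$); this imprecision is already present in the paper's statement, so it is not a defect of your proof.
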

The bounds are an immediate consequence of  Proposition \ref{propgamma}. Remark that with these different convexity properties of the geometric median, we are close to the  framework of \cite{bach2014adaptivity}. The difference comes from the fact that  $G$ does not satisfy the generalized self-concordance assumption which is central in the latter work. 



\section{Rates of convergence of the Robbins-Monro algorithms}\label{sectionrateofconvergence}



If the  sequence $\left( \gamma_{n} \right)_n$  of stepsizes fulfills the classical following assumptions:
\[
\sum_{n\geq 1}\gamma_{n}^{2}<\infty    \quad \mbox{ and } \quad  \sum_{n\geq 1}\gamma_{n} = \infty,
\]
 the recursive estimator  $Z_n$ is strongly consistent (see  \cite{HC}).
The first condition on the stepsizes ensures that the recursive algorithm converges towards some value in $H$ whereas the second condition forces the algorithm to converge to $m$, the unique minimizer of $G$.

From now on, $Z_{1}$ is chosen so that it is bounded (consider for example  $Z_{1}=X_{1}\mathbb{1}_{\{\| X \| \leq M'\}}$ for some non negative constant $M'$).
Consequently, there is a positive constant $M$ such that for all $n \geq 1$:
\[
 \mathbb{E}\left[ \| Z_{n}-m \|^{2} \right] \leq M.
\]

Let us consider now sequences $\left( \gamma_{n}\right)_n$ of the form $\gamma_{n}=c_{\gamma}n^{-\alpha}$ where $c_{\gamma}$ is a positive constant, and $\alpha \in (1/2,1)$. In order to get  confidence balls for the median, the following additional  assumption is supposed to hold.

\begin{itemize}
\item[$\textbf{(A3)}$] There is a positive constant $C$ such that for all $h \in H$:
\[
 \mathbb{E}\left[ \| X-h \|^{-2}\right] \leq C.
\]  
\end{itemize}
This assumption ensures that  the remainder  term in the Taylor  approximation to  the gradient is bounded. Note that this assumption is also required to get the asymptotic normality in \cite{HC}.
 It is also assumed  in \cite{chakraborty2014spatial} for deriving the asymptotic normality of the empirical median estimator.  Remark that for the sake of simplicity, we have considered  the same constant $C$ in \textbf{(A2)} and \textbf{(A3)}. As in \textbf{(A2)}, Assumption \textbf{(A3)} is closely related to small ball probabilities and  when $H = \mathbb{R}^{d}$, this assumption is satisfied when $d\geq 3$  under weak conditions.

We state now the first new and important result on the rates of convergence in quadratic mean of the Robbins Monro algorithm. 
A comparison with Proposition~3.2 in \cite{HC} reveals that the logarithmic term has disappeared as well as the constant $C_N$ that was related to a sequence $(\Omega_N)_N$ of events whose probability was tending to one.
 \begin{theo}\label{bonnevitesse}
Assuming  \textbf{(A1)-(A3)} hold, the algorithm $\left( Z_{n}\right)$ defined by (\ref{algo}), with $\gamma_{n}=c_{\gamma}n^{-\alpha}$, converges in quadratic mean, for all $\alpha \in (1/2,1)$ and for all $\alpha <\beta<3\alpha -1$, with the following rate:
\begin{align}
\mathbb{E}\left[ \| Z_{n}-m \|^{2} \right] & = O\left( \frac{1}{n^{\alpha}}\right) ,\\
\mathbb{E}\left[ \| Z_{n}-m  \|^{4}\right] & = O \left( \frac{1}{n^{\beta}}\right) .
\end{align}
\end{theo}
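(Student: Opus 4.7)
The plan is to derive two Lyapunov-type recursive inequalities, one for $v_n := \mathbb{E}[\|Z_n - m\|^2]$ and one for $w_n := \mathbb{E}[\|Z_n - m\|^4]$, and to solve them via the standard Robbins--Monro Gronwall comparison. The starting point is the decomposition \eqref{decphi} together with the martingale properties $\mathbb{E}[\xi_{n+1} | \mathcal{F}_n] = 0$ and \eqref{majxi1}. The a priori second-moment bound $v_n \leq M$ recalled above is exploited throughout to absorb the tail contributions coming from the fact that the strong convexity in Proposition~\ref{propgamma} is only local.

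For the $L^2$ bound I would square \eqref{decphi}, take the conditional expectation with respect to $\mathcal{F}_n$, and use \eqref{majxi1} to obtain
\[
\mathbb{E}\bigl[\|Z_{n+1}-m\|^2 \bigm| \mathcal{F}_n\bigr] \leq \|Z_n - m\|^2 - 2\gamma_n \langle Z_n - m, \Phi(Z_n)\rangle + \gamma_n^2 .
\]
The key drift estimate is obtained by writing $\Phi(Z_n) = \int_0^1 \Gamma_{m + t(Z_n - m)}(Z_n - m)\, dt$ (since $\Phi(m)=0$) and invoking Proposition~\ref{propgamma}: on the event $\{\|Z_n\| \leq A\}$ one has $\langle Z_n - m, \Phi(Z_n)\rangle \geq c_A \|Z_n - m\|^2$, whereas on its complement only the trivial non-negativity coming from the convexity of $G$ is available. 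The latter contribution is absorbed using $v_n \leq M$ and Markov's inequality, $A$ being chosen large enough that the tail is strictly dominated by the drift. Taking expectations yields the scalar recursion $v_{n+1} \leq (1 - c\gamma_n) v_n + c' \gamma_n^2$, and a standard Gronwall argument gives $v_n = O(\gamma_n) = O(n^{-\alpha})$; the spurious logarithmic factor appearing in Proposition~3.2 of \cite{HC} is avoided precisely because the tail is now controlled by direct moment bounds rather than by restricting to an increasing family of high-probability events.

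For the $L^4$ bound I would write $\|Z_{n+1} - m\|^4 = (\|Z_n - m\|^2 + S_n)^2$, where $S_n$ collects the deterministic drift, the martingale increment $2\gamma_n \langle Z_n - m - \gamma_n \Phi(Z_n), \xi_{n+1}\rangle$, and the variance contribution $\gamma_n^2 \|\xi_{n+1}\|^2$. Taking conditional expectation, using $\|\xi_{n+1}\| \leq 2$ and Young's inequality to dispose of the cubic cross term, the quadratic martingale contribution reduces via Cauchy--Schwarz and \eqref{majxi1} to an $O(\gamma_n^2 \|Z_n - m\|^2)$ term. Combined with the drift $-4\gamma_n \|Z_n - m\|^2 \langle Z_n - m, \Phi(Z_n)\rangle$, treated as in the $L^2$ step but with the tail now absorbed using the just-established rate $v_n = O(n^{-\alpha})$, one obtains a recursion of the form
\[
w_{n+1} \leq (1 - c\gamma_n) w_n + C_1 \gamma_n^2 v_n + C_2 \gamma_n^4 .
\]
Substituting $v_n = O(n^{-\alpha})$ makes the polynomial forcing of order $n^{-3\alpha}$, and a final application of the Gronwall comparison lemma for $\alpha \in (1/2, 1)$ yields $w_n = O(n^{-\beta})$ for every $\beta \in (\alpha, 3\alpha - 1)$.

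The main difficulty, and the one on which any gain over \cite{HC} hinges, is the handling of the tail event $\{\|Z_n - m\| > A\}$ where strong convexity fails. Letting $A$ grow with $n$ would shrink the constant $c_A$ and kill the drift, whereas keeping $A$ fixed demands a sharp quantitative control of the tail mass. The resolution in both the $L^2$ and $L^4$ steps is the direct coupling between the relevant moment bound (a priori for the second moment, freshly established for the fourth) and Markov's inequality, which keeps the argument purely polynomial and ensures that the tail is strictly dominated by the drift term in each recursion.
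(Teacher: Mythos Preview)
There is a genuine gap in your $L^2$ argument, and it propagates to the $L^4$ step. When you split on $\{\|Z_n-m\| \leq A\}$ with $A$ fixed, the recursion you actually obtain is
\[
v_{n+1} \leq (1 - 2c_A\gamma_n)\, v_n + 2c_A\gamma_n\,\mathbb{E}\bigl[\|Z_n-m\|^2\mathbb{1}_{\{\|Z_n-m\|>A\}}\bigr] + \gamma_n^2.
\]
Markov's inequality together with the a priori bound $v_n \leq M$ (even combined with a uniform fourth-moment bound via Cauchy--Schwarz) only gives $\mathbb{E}[\|Z_n-m\|^2\mathbb{1}_{\{\|Z_n-m\|>A\}}] \leq C(A)$, a constant independent of $n$. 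The tail therefore contributes an $O(\gamma_n)$ forcing term, not $O(\gamma_n^2)$, and Gronwall then yields only $v_n = O(1)$, recovering nothing beyond the a priori bound. Letting $A = A_n \to \infty$ does not help either: the strong-convexity constant $c_{A_n}$ deteriorates (Lemma~\ref{lemmajznphi} in the paper shows it is only of order $n^{-(1-\alpha)}$ on the deterministic range $\|Z_n-m\| \leq Cn^{1-\alpha}$), so the contraction becomes $(1 - c/n)$ and the forcing $\gamma_n^2 = c_\gamma^2 n^{-2\alpha}$ gives at best $v_n = O(n^{-(2\alpha-1)})$, strictly worse than $O(n^{-\alpha})$ since $\alpha < 1$.

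The paper circumvents this by abandoning the direct Lyapunov route for $L^2$ and instead linearising $\Phi$ around $m$: writing $Z_n - m = \beta_{n-1}(Z_1-m) + \beta_{n-1}M_n - \beta_{n-1}R_n$ with $\beta_{n-1} = \prod_k(I_H - \gamma_k\Gamma_m)$, the contraction is governed by the \emph{fixed} operator $\Gamma_m$ (so no local-convexity issue), and the nonlinearity is pushed into the remainder $R_n$, which by \textbf{(A3)} satisfies $\|\delta_k\| \leq C_m\|Z_k-m\|^2$ (Lemma~\ref{lemdelta}). This couples the $L^2$ bound to the $L^4$ bound via $\mathbb{E}[\|\beta_{n-1}R_n\|^2] \leq C_3\sup_{n/2-1\leq k\leq n} \mathbb{E}[\|Z_k-m\|^4]$ (Lemma~\ref{majexp}). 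For $L^4$ the paper does use a Lyapunov recursion, but with contraction $(1-1/n)^2$ rather than your $(1-c\gamma_n)$, a \emph{growing} threshold $cn^{1-\alpha}$, and---crucially---tail control via uniform bounds on \emph{all} even moments $\mathbb{E}[\|Z_n-m\|^{2p}] \leq M_p$ (Lemma~\ref{lemmajopourrie}), which is what makes $\mathbb{P}[\|Z_n-m\| \geq cn^{1-\alpha}]$ polynomially small of arbitrary order. The two rates are then established by a \emph{simultaneous} induction on $n$ (Lemmas~\ref{majexp} and~\ref{propor4} feeding each other), not sequentially as you propose.
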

Upper bounds for the rates of convergence at  order four are also given because they will be useful in several proofs. Remark that obtaining  better rates of convergence at the order four would also be possible at the expense of longer proofs, but it is not necessary here. The proof of this theorem relies on two technical lemmas. The following one gives an upper bound of the quadratic mean error. 
\begin{lem}\label{majexp}
Assuming  \textbf{(A1)-(A3)} hold, there are positive constants $C_{1},C_{2},C_{3},C_{4}$ such that for all $n\geq 1$:
\begin{equation}
\mathbb{E}\left[ \| Z_{n}-m \|^{2}\right] \leq C_{1}e^{-C_{4}n^{1-\alpha}}+\frac{C_{2}}{n^{\alpha}}+C_{3}\sup_{n/2-1 \leq k \leq n}\mathbb{E}\left[ \| Z_{k}-m \|^{4}\right] .
\end{equation}
\end{lem}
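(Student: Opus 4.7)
The plan is to derive a one-step inequality of the form
\begin{equation*}
u_{n+1} \;\leq\; (1 - c_m \gamma_n)\, u_n \;+\; K\,\gamma_n\, v_n \;+\; \gamma_n^{2},
\end{equation*}
where $u_n := \mathbb{E}\bigl[\|Z_n - m\|^2\bigr]$ and $v_n := \mathbb{E}\bigl[\|Z_n - m\|^4\bigr]$, and then to iterate it. Starting from the Robbins--Monro decomposition (\ref{decphi}), I expand the square and take the conditional expectation with respect to $\mathcal{F}_n$. Using $\mathbb{E}[\xi_{n+1}\mid\mathcal{F}_n] = 0$ (the martingale property recalled just before (\ref{decphi})) together with the bound $\mathbb{E}[\|\xi_{n+1}\|^2\mid\mathcal{F}_n] \leq 1 - \|\Phi(Z_n)\|^2$ from (\ref{majxi1}), the drift and martingale cross terms collapse and one obtains
\begin{equation*}
\mathbb{E}\bigl[\|Z_{n+1}-m\|^{2}\mid \mathcal{F}_n\bigr] \;\leq\; \|Z_n - m\|^{2} \;-\; 2\gamma_n \langle Z_n-m,\,\Phi(Z_n)\rangle \;+\; \gamma_n^{2}.
\end{equation*}

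The next step is to linearise $\Phi$ around the median. Since $\Phi(m) = 0$, Taylor's formula yields $\Phi(Z_n) = \Gamma_m(Z_n-m) + \delta_n$ with $\delta_n = \int_0^1(\Gamma_{m+t(Z_n-m)} - \Gamma_m)(Z_n-m)\,dt$. The key technical input is a bound of the form $\|\delta_n\| \leq C'\,\|Z_n-m\|^{2}$, which I would obtain by differentiating the map $h \mapsto (X-h)/\|X-h\|$ and taking expectations; the integrability needed to control the second order increment is exactly assumption \textbf{(A3)}. Plugging this expansion in and using Proposition \ref{propgamma} to get $\langle Z_n-m,\Gamma_m(Z_n-m)\rangle \geq c_m\|Z_n-m\|^2$ yields
\begin{equation*}
\mathbb{E}\bigl[\|Z_{n+1}-m\|^{2}\mid\mathcal{F}_n\bigr] \;\leq\; (1 - 2c_m\gamma_n)\|Z_n-m\|^{2} \;+\; 2C'\gamma_n\,\|Z_n-m\|^{3} \;+\; \gamma_n^{2}.
\end{equation*}
The cubic term is split by Young's inequality, $\|Z_n-m\|^{3} \leq \varepsilon\|Z_n-m\|^{2} + (4\varepsilon)^{-1}\|Z_n-m\|^{4}$, with $\varepsilon = c_m/(2C')$ chosen so that the linear-in-$\|\cdot\|^2$ contribution is absorbed into the contraction factor. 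Taking full expectation produces the recursion announced above with $K = C'^{\,2}/c_m$.

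The last step is to unroll this recursion:
\begin{equation*}
u_{n+1} \;\leq\; u_1 \prod_{k=1}^{n}(1-c_m\gamma_k) \;+\; \sum_{k=1}^{n}\bigl(K\gamma_k v_k + \gamma_k^{2}\bigr)\prod_{j=k+1}^{n}(1-c_m\gamma_j),
\end{equation*}
and to estimate the three pieces separately using $\gamma_k = c_\gamma k^{-\alpha}$. The leading product is bounded by $\exp\bigl(-c_m\sum_{k=1}^n\gamma_k\bigr) \leq C_1 \exp(-C_4 n^{1-\alpha})$ with $C_4 = c_m c_\gamma/(1-\alpha)$, giving the exponential term. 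For the $\gamma_k^{2}$ sum, a standard Abel-type argument (using $\gamma_k\prod_{j=k+1}^n(1-c_m\gamma_j) \approx c_m^{-1}(P_{k-1}-P_k)$ with $P_k = \prod_{j=k+1}^n(1-c_m\gamma_j)$) yields $O(\gamma_n) = O(n^{-\alpha})$, which provides the $C_2 n^{-\alpha}$ term. For the $v_k$ sum I split at $n/2$: on $k < n/2$ the cumulative product $\prod_{j=k+1}^n(1-c_m\gamma_j)$ is already of order $e^{-c'n^{1-\alpha}}$, and since $\|Z_k\|$ grows at most polynomially (from $\|Z_{n+1}\|\leq\|Z_n\|+\gamma_n$ and the boundedness of $Z_1$), this contribution is swallowed by $C_1 e^{-C_4 n^{1-\alpha}}$ after possibly enlarging $C_1$; on $k \geq n/2$ one factors out the supremum $\sup_{n/2-1\leq k\leq n} v_k$ and bounds $\sum_{k\geq n/2}\gamma_k\prod_{j>k}(1-c_m\gamma_j) \leq 1/c_m$ by the same Abel argument, producing the $C_3\sup v_k$ term.

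The main obstacle is the Taylor remainder bound $\|\delta_n\| \lesssim \|Z_n-m\|^{2}$ via \textbf{(A3)}: this requires handling the non-smoothness of $h\mapsto\|X-h\|$ at $h=X$ and exploiting the small-ball control provided by $\mathbb{E}[\|X-h\|^{-2}]\leq C$. The sum estimates in the last step are essentially bookkeeping, but choosing the cut at $n/2$ (rather than elsewhere) is what makes the suprema in the statement start at $n/2 - 1$ rather than at $1$.
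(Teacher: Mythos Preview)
Your argument is correct and reaches the same bound, but the route is genuinely different from the paper's. The paper does \emph{not} work with a scalar recursion on $u_n=\mathbb{E}\|Z_n-m\|^2$. Instead it first linearises the iterates themselves via the operator decomposition
\[
Z_n-m \;=\; \beta_{n-1}(Z_1-m)\;+\;\beta_{n-1}M_n\;-\;\beta_{n-1}R_n,
\qquad \beta_{n}=\prod_{k\le n}(I_H-\gamma_k\Gamma_m),
\]
(equation~(\ref{decbeta})), and then bounds $\mathbb{E}\|Z_n-m\|^2$ by three times the squared norm of each piece. The initial term gives the $C_1e^{-C_4 n^{1-\alpha}}$ contribution through the spectral estimate $\|\beta_{n-1}\|\le c_1\exp(-c_m\sum_{k<n}\gamma_k)$; the martingale piece gives $\mathbb{E}\|\beta_{n-1}M_n\|^2\le\sum_k\gamma_k^2\|\beta_{n-1}\beta_k^{-1}\|^2$, which after the same split at $n/2$ yields $C_2 n^{-\alpha}$; and the remainder piece $\beta_{n-1}R_n=\sum_k\gamma_k\beta_{n-1}\beta_k^{-1}\delta_k$, combined with the bound $\|\delta_k\|\le C_m\|Z_k-m\|^2$ of Lemma~\ref{lemdelta}, produces the $C_3\sup_k\mathbb{E}\|Z_k-m\|^4$ term.

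Your approach linearises $\Phi$ only inside the inner product $\langle Z_n-m,\Phi(Z_n)\rangle$, turns the resulting cubic into a fourth-moment source by Young's inequality, and unrolls a one-dimensional contraction. This is more elementary and avoids the operator calculus entirely. The paper's route is heavier but not gratuitously so: the same decomposition $\beta_{n-1}M_n$, $\beta_{n-1}R_n$ is exactly what drives the exponential inequality of Proposition~\ref{pinelisadapte} and Corollary~\ref{corpinbern}, hence Theorem~\ref{interv}; Lemma~\ref{majexp} is proved there essentially as a warm-up for that machinery. One small point to tighten in your sketch: the factor $(1-c_m\gamma_k)$ can be negative for finitely many small $k$, so the unrolling should start from a rank $n_0$ with $c_m\gamma_{n_0}<1$ and use the uniform bound $u_n\le M$ for $n\le n_0$; this is cosmetic but worth stating.
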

The proof of Lemma~\ref{majexp} is given in Section~\ref{sectionproof}.

\begin{lem}\label{propor4}
Assuming the three assumptions \textbf{(A1)} to \textbf{(A3)}, for all $\alpha \in (1/2,1)$, there are a rank $n_{\alpha}$ and positive constants $C_{1}',C_{2}'$ such that for all $n \geq n_{\alpha}$:
\begin{align}\label{maj4}
\mathbb{E}\left[ \| Z_{n+1}-m \|^{4}\right] & \leq \left( 1-\frac{1}{n}\right)^{2}\mathbb{E}\left[ \| Z_{n}-m \|^{4}\right]+\frac{C_{1}'}{n^{3\alpha}}+C_{2}'\frac{1}{n^{2\alpha}} \mathbb{E}\left[ \| Z_{n}-m \|^{2}\right] .
\end{align}
\end{lem}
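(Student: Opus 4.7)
The plan is to derive the inequality from the stochastic gradient recursion~(\ref{decphi}) by squaring the $L^2$ identity for $\|Z_{n+1}-m\|^2$. Setting $V_n := \|Z_n-m\|^2$ and using that $U_{n+1}$ is a unit vector (so $\|Z_{n+1}-Z_n\|=\gamma_n$), one has
\[
V_{n+1} = V_n + \gamma_n^2 - 2\gamma_n\langle Z_n-m, \Phi(Z_n)\rangle + 2\gamma_n\langle Z_n-m, \xi_{n+1}\rangle.
\]
I would square this identity and take the conditional expectation given $\mathcal{F}_n$. The cross terms linear in $\xi_{n+1}$ vanish by the martingale-difference property, while (\ref{majxi1}) and Cauchy--Schwarz yield $\mathbb{E}[\langle Z_n-m,\xi_{n+1}\rangle^2|\mathcal{F}_n]\leq V_n$. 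Using $\|\Phi(Z_n)\|\leq 1$ and $V_n^{1/2}\leq 1+V_n$ to collect the $\gamma_n^3$ terms into $\gamma_n^2 V_n+\gamma_n^4$, I obtain a pointwise inequality
\[
\mathbb{E}[V_{n+1}^2|\mathcal{F}_n] \leq V_n^2 - 4\gamma_n V_n\langle Z_n-m,\Phi(Z_n)\rangle + K(\gamma_n^2 V_n+\gamma_n^4).
\]

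Next I would lower-bound $\langle Z_n-m,\Phi(Z_n)\rangle$ via a second-order Taylor expansion of $\Phi$ around $m$: write $\Phi(Z_n)=\Gamma_m(Z_n-m)+\delta_n$, where $\delta_n=\int_0^1(\Gamma_{m+t(Z_n-m)}-\Gamma_m)(Z_n-m)\,dt$. Assumption~\textbf{(A3)} forces $h\mapsto\Gamma_h$ to be Lipschitz on $H$ (a standard consequence of $\mathbb{E}[\|X-h\|^{-2}]\leq C$), so $\|\delta_n\|\leq C\|Z_n-m\|^2$. Combined with the strong convexity at $m$ (\ref{encgamma}), this produces
\[
\langle Z_n-m,\Phi(Z_n)\rangle \geq c_m V_n - CV_n^{3/2},
\]
hence $-4\gamma_n V_n\langle Z_n-m,\Phi(Z_n)\rangle\leq -4c_m\gamma_n V_n^2 + 4C\gamma_n V_n^{5/2}$.

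The main technical obstacle is handling the $V_n^{5/2}$ residue, since no Young splitting alone converts it into a linear combination of $V_n^2$ and $V_n$ without introducing higher powers of $V_n$. My plan is to combine the Taylor bound $\|\delta_n\|\leq C\|Z_n-m\|^2$ with the complementary bound $\|\delta_n\|\leq 1+C\|Z_n-m\|$ (from $\|\Phi\|\leq 1$), and to apply these piecewise according to whether $V_n$ is small or large, using the smallness of $\gamma_n$ to ensure that the residue contributes only a strict fraction of the leading $-4c_m\gamma_n V_n^2$ term plus a multiple of $\gamma_n^2 V_n$. Taking expectation then yields
\[
\mathbb{E}[V_{n+1}^2]\leq (1-4c_m'\gamma_n)\mathbb{E}[V_n^2]+K'\gamma_n^2\mathbb{E}[V_n]+K\gamma_n^4
\]
for some $0<c_m'<c_m$ and $n$ large enough.

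Finally, the rank $n_\alpha$ is chosen so that $4c_m'\gamma_n\geq 2/n$ for all $n\geq n_\alpha$. Since $\gamma_n=c_\gamma n^{-\alpha}$ with $\alpha<1$, this amounts to $4c_m'c_\gamma n^{1-\alpha}\geq 2$ and holds eventually. Then $1-4c_m'\gamma_n\leq 1-2/n\leq (1-1/n)^2$, and using $\gamma_n^2=c_\gamma^2/n^{2\alpha}$ together with $\gamma_n^4\leq c_\gamma^4/n^{3\alpha}$ (since $4\alpha\geq 3\alpha$) delivers the claim with $C_1'=Kc_\gamma^4$ and $C_2'=K'c_\gamma^2$.
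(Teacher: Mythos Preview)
Your argument has a genuine gap at the step where you claim to absorb the residue $4C\gamma_n V_n^{5/2}$ into a strict fraction of $4c_m\gamma_n V_n^2$ plus a multiple of $\gamma_n^2 V_n$. This is impossible without additional control on $V_n$: dividing the desired inequality $4C\gamma_n V_n^{5/2}\le 4\epsilon c_m\gamma_n V_n^2+C'\gamma_n^2 V_n$ by $4\gamma_n V_n$ gives $CV_n^{3/2}\le \epsilon c_m V_n+C'\gamma_n/4$, which fails for large $V_n$. Your proposed fallback on the ``large'' set, namely $\|\delta_n\|\le 1+C\|Z_n-m\|$, does not help either: it yields $\langle Z_n-m,\Phi(Z_n)\rangle\ge (c_m-C)V_n-V_n^{1/2}$, and since every eigenvalue of $\Gamma_m$ lies in $[c_m,C]$ one always has $c_m\le C$, so the leading coefficient is nonpositive and you get no contraction at all on that set. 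The bound $\|\Phi\|\le 1$ in fact forces $\langle Z_n-m,\Phi(Z_n)\rangle\le V_n^{1/2}$, so no lower bound proportional to $V_n$ can hold globally.

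The paper circumvents this by a different mechanism. It splits on the $n$-dependent event $\{\|Z_n-m\|\le cn^{1-\alpha}\}$ rather than on a fixed threshold. On this event, a direct convexity argument (Lemma~\ref{lemmajznphi}, not a Taylor remainder bound) shows $\langle\Phi(Z_n),Z_n-m\rangle\ge \frac{c_2}{\|Z_n-m\|}\,V_n$ when $\|Z_n-m\|\ge 1$, hence $\ge \frac{1}{c_\gamma n^{1-\alpha}}V_n$ on the event; with $\gamma_n=c_\gamma n^{-\alpha}$ this produces exactly the contraction factor $(1-1/n)$ after squaring. On the complementary ``bad'' event, the paper exploits two ingredients you do not use: the \emph{deterministic} bound $\|Z_{n+1}-m\|\le C_\alpha'(n+1)^{1-\alpha}$ coming from the boundedness of $Z_1$ and $\|Z_{k+1}-Z_k\|\le\gamma_k$, and the probability estimate $\mathbb{P}[\|Z_n-m\|>cn^{1-\alpha}]\le C_\alpha n^{-(4-\alpha)}$ (Lemma~\ref{lemmajopourrie}, itself requiring uniform bounds on \emph{all} even moments $\mathbb{E}[\|Z_n-m\|^{2p}]$). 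Multiplying these gives the $C_1'/n^{3\alpha}$ term. Without this event-splitting with a growing threshold and the associated moment/probability bounds, the fourth-moment recursion cannot be closed.
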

The proof of Lemma \ref{propor4} is given in Section~\ref{sectionproof}.
The next result gives the exact rate of convergence in quadratic mean and states that it is not possible to get the parametric rates of convergence with the Robbins Monro algorithm when $\alpha \in (1/2,1)$.
\begin{prop}\label{propbonnevitesse}
Assume \textbf{(A1)-(A3)} hold, for all $\alpha \in (1/2,1)$, there is a positive constant $C'$ such that for all $n \geq 1$,
\begin{align*}
\mathbb{E}\left[ \left\| Z_{n}-m \right\|^{2} \right] &\geq \frac{C'}{n^{\alpha}}.
\end{align*}
\end{prop}

\section{Non asymptotic confidence balls}\label{sectioninterv}
\subsection{Non asymptotic confidence balls for the Robbins-Monro algorithm}\label{sectionpinelis}

The aim is now to derive an upper bound for  $\mathbb{P}\left[ \| Z_{n}-m \| \geq t \right]$, for $t >0$. 
A simple first result can be obtained by applying Markov's inequality and  Theorem \ref{bonnevitesse}. We give below a sharper bound that relies on exponential inequalities that are close to the ones given in Theorem 3.1 in \cite{Pinelis}. As explained in Remark~\ref{rmq:Pinelis} below, it was not possible to apply directly Theorem 3.1 of \cite{Pinelis} and the following proposition gives an analogous exponential inequality in the case where we do not have exactly a sequence of martingale differences. 
\begin{prop}\label{pinelisadapte}
Let $\left( \beta_{n,k} \right)_{(k,n)\in \mathbb{N}\times \mathbb{N}}$ be a sequence of linear operators on $H$ and $\left( \xi_{n} \right)$ be a sequence of $H$-valued martingale differences adapted to a filtration $\left( \mathcal{F}_{n} \right)$. Moreover, let $\left( \gamma_{n} \right)$ be a sequence of positive real numbers. Then, for all $r >0$ and for all  $n \geq 1$,
\begin{align*}
\mathbb{P}\left[ \left\| \sum_{k=1}^{n-1}\gamma_{k}\beta_{n-1,k}\xi_{k+1} \right\| \geq r \right] & \leq 2e^{-r}\left\| \prod_{j=2}^{n}\left( 1+\mathbb{E}\left[ e^{\left\| \beta_{n-1,j-1}\gamma_{j-1}\xi_{j}\right\|}-1-\left\| \beta_{n-1,j-1}\gamma_{j-1}\xi_{j}\right\| \Big| |\mathcal{F}_{j-1}\right]\right) \right\| \\
&  \leq 2\exp \left( -r+\left\| \sum_{j=2}^{n}\mathbb{E}\left[ e^{\left\| \beta_{n-1,j-1}\gamma_{j-1}\xi_{j}\right\|}-1-\left\| \beta_{n-1,j-1}\gamma_{j-1}\xi_{j}\right\| \Big| |\mathcal{F}_{j-1}\right] \right\| \right) .
\end{align*}
\end{prop}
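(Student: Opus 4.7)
My plan is to adapt Pinelis' exponential-martingale strategy to the operator-weighted sums. Fix $n\geq 2$ and set $T_j:=\sum_{k=1}^{j-1}\gamma_k\beta_{n-1,k}\xi_{k+1}$ for $j=1,\ldots,n$. Since the operators $\beta_{n-1,k}$ are deterministic, the sequence $(T_j)$ is a Hilbert-valued martingale with respect to $(\mathcal{F}_j)$, with increments $D_j:=\gamma_{j-1}\beta_{n-1,j-1}\xi_j$ satisfying $\mathbb{E}[D_j\mid\mathcal{F}_{j-1}]=0$. The first step is a Chernoff-type reduction: using $\cosh(r)\geq\tfrac{1}{2}e^r$ together with the monotonicity of $\cosh$ on $[0,\infty)$, Markov's inequality yields
\[
\mathbb{P}\bigl[\|T_n\|\geq r\bigr] \;\leq\; \frac{\mathbb{E}[\cosh(\|T_n\|)]}{\cosh(r)} \;\leq\; 2e^{-r}\,\mathbb{E}[\cosh(\|T_n\|)],
\]
which accounts for the factor $2$ in the stated bound.

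The heart of the argument is the conditional inequality
\[
\mathbb{E}\bigl[\cosh(\|T_j+D_{j+1}\|)\mid\mathcal{F}_j\bigr] \;\leq\; \cosh(\|T_j\|)\Bigl(1+\mathbb{E}\bigl[e^{\|D_{j+1}\|}-1-\|D_{j+1}\|\mid\mathcal{F}_j\bigr]\Bigr).
\]
I will prove it by expanding $\cosh(\|T_j+D_{j+1}\|)=\sum_k \|T_j+D_{j+1}\|^{2k}/(2k)!$, applying the Hilbert identity $\|T_j+y\|^2=\|T_j\|^2+2\langle T_j,y\rangle+\|y\|^2$ inductively to write $\|T_j+y\|^{2k}$ as a polynomial in $\|T_j\|^2$, $\|y\|^2$ and $\langle T_j,y\rangle$, and then using $\mathbb{E}[D_{j+1}\mid\mathcal{F}_j]=0$ to eliminate, after conditioning, the terms containing an odd power of $\langle T_j,D_{j+1}\rangle$. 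Collecting the surviving (purely even) terms produces an upper bound of the form $\cosh(\|T_j\|)\bigl(1+\mathbb{E}[\cosh(\|D_{j+1}\|)-1\mid\mathcal{F}_j]\bigr)$, and the elementary pointwise inequality $\cosh(t)-1\leq e^t-1-t$ for $t\geq 0$ converts it into the required exponential form.

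Iterating this conditional inequality from $j=n-1$ down to $j=1$ and taking total expectations yields the product bound on $\mathbb{E}[\cosh(\|T_n\|)]$ (with the $\|\cdot\|$ surrounding the product in the statement interpreted as an essential supremum turning a random product into a deterministic constant), which together with the Chernoff step gives the first displayed inequality. The second inequality then follows immediately from $1+u\leq e^u$ applied factor by factor. The main obstacle will be the Hilbert-space inequality above: unlike on the real line, there is no multiplicative identity for $\cosh(\|x+y\|)$, so the cross-terms $\langle T_j,D_{j+1}\rangle$ must be tracked carefully when expanding the even powers, and the comparison between $\cosh(t)-1$ and $e^t-1-t$ has to be validated at every order of the series. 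The uniform bound $\|\xi_j\|\leq 2$ recalled in \eqref{majxi1} ensures that all exponential moments appearing during the term-by-term manipulations are finite, and thus that Fubini-type exchanges of series and conditional expectations are legitimate.
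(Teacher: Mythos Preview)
Your overall architecture matches the paper's: a Chernoff step via $\cosh(r)\geq \tfrac12 e^r$, a conditional inequality of the form
\[
\mathbb{E}\bigl[\cosh(\|T_{j-1}+D_j\|)\,\big|\,\mathcal{F}_{j-1}\bigr]\;\leq\;(1+e_j)\cosh(\|T_{j-1}\|),\qquad e_j=\mathbb{E}\bigl[e^{\|D_j\|}-1-\|D_j\|\,\big|\,\mathcal{F}_{j-1}\bigr],
\]
and iteration. The gap lies in the way you propose to obtain this conditional inequality. Your plan is to expand $\cosh(\|T+D\|)=\sum_k \|T+D\|^{2k}/(2k)!$, develop $\|T+D\|^{2k}$ as a polynomial in $\|T\|^2$, $\|D\|^2$, $\langle T,D\rangle$, and then use $\mathbb{E}[D\mid\mathcal{F}]=0$ to kill ``the terms containing an odd power of $\langle T,D\rangle$''. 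But $\mathbb{E}[D\mid\mathcal{F}]=0$ only annihilates terms that are \emph{linear} in $D$; it says nothing about $\mathbb{E}[\langle T,D\rangle^3\mid\mathcal{F}]$ or about mixed terms such as $\mathbb{E}[\langle T,D\rangle\,\|D\|^2\mid\mathcal{F}]$, which already appears in $\|T+D\|^4$. So the claimed cancellation fails beyond the first order, and the subsequent regrouping into the product $\cosh(\|T\|)\bigl(1+\mathbb{E}[\cosh(\|D\|)-1\mid\mathcal{F}]\bigr)$ has no justification.

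The paper (following Pinelis) avoids this obstacle by a second-derivative argument rather than a term-by-term series comparison. One sets $\varphi(t)=\mathbb{E}[\cosh(\|T_{j-1}+tD_j\|)\mid\mathcal{F}_{j-1}]$ and uses the pointwise differential inequality $(\cosh u)''\leq \|v\|^2\cosh u$ for $u(t)=\|x+tv\|$, together with $\cosh(\|x+tv\|)\leq e^{t\|v\|}\cosh\|x\|$, to get $\varphi''(t)\leq \cosh(\|T_{j-1}\|)\,\mathbb{E}[\|D_j\|^2 e^{t\|D_j\|}\mid\mathcal{F}_{j-1}]$. The martingale condition is used only once, to give $\varphi'(0)=0$; then the exact Taylor formula $\varphi(1)=\varphi(0)+\int_0^1(1-t)\varphi''(t)\,dt$ and the elementary identity $\int_0^1(1-t)s^2 e^{ts}\,dt=e^s-1-s$ deliver exactly the factor $1+e_j$. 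This is the missing idea in your proposal: replace the unworkable series cancellation by the Taylor-with-integral-remainder step, which needs the martingale property only at first order.
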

The proof of Proposition \ref{pinelisadapte} is postponed to Section \ref{sectionproof}. 
As in \cite{Tarres}, it enables to give a sharp upper bound for $\mathbb{P}\left[ \left\| \sum_{k=1}^{n-1}\beta_{n-1,k}\gamma_{k}\xi_{k+1} \right\| \geq t \right]$.

\begin{cor}\label{corpinbern1}
Let $\left( \beta_{n,k} \right)$ be sequence of linear operators on $H$, $\left( \xi_{n} \right)$ be a sequence of $H$-valued martingale differences adapted to a filtration $\left( \mathcal{F}_{n}\right)$ and $\left( \gamma_{n} \right)$ be a sequence of positive real numbers. Let $\left( N_{n} \right)$ and $\left(\sigma_{n}^{2}\right)$ be two deterministic sequences such that
\[N_{n} \geq \sup_{k\leq n-1}\| \beta_{n-1,k}\gamma_{k}\xi_{k+1}\| \quad a.s. \quad \mbox{and} \quad \sigma_{n}^{2} \geq \sum_{k=1}^{n-1}\mathbb{E}\left[ \| \beta_{n-1,k}\gamma_{k}\xi_{k+1}\| \big| \mathcal{F}_{n} \right].\]
 For all $t > 0$ and all $n \geq 1$, 
\begin{align*}
\mathbb{P}\left[ \left\| \sum_{k=1}^{n-1}\beta_{n-1,k}\gamma_{k}\xi_{k+1} \right\| \geq t \right] & \leq 2\exp \left( -\frac{t^{2}}{2(\sigma_{n}^{2} + tN_{n}/3)}\right).
\end{align*} 
\end{cor}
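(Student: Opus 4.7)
My plan is to derive the Corollary from Proposition \ref{pinelisadapte} by the standard Bernstein-type optimization: apply that exponential bound with the stepsizes $\gamma_k$ rescaled by a free parameter $\lambda>0$ and with $r=\lambda t$, and then minimize over $\lambda$.

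The first step is to control each conditional exponential moment appearing on the right-hand side of Proposition~\ref{pinelisadapte}. Setting $Y_j:=\|\beta_{n-1,j-1}\gamma_{j-1}\xi_j\|$, the hypothesis gives $0\le Y_j\le N_n$ almost surely. I would use the elementary inequality
\[
e^x-1-x\;\le\;\frac{x^2}{2\,(1-x/3)},\qquad 0\le x<3,
\]
(which follows from a term-by-term comparison of the Taylor series on the two sides) applied to $x=\lambda Y_j$, under the restriction $\lambda N_n<3$. This yields
\[
e^{\lambda Y_j}-1-\lambda Y_j\;\le\;\frac{\lambda^2 Y_j^2}{2\,(1-\lambda N_n/3)}.
\]
Taking conditional expectations given $\mathcal{F}_{j-1}$ and summing over $j$, the variance control on $(\sigma_n^2)$ produces the deterministic upper bound
\[
\sum_{j=2}^{n}\mathbb{E}\bigl[\,e^{\lambda Y_j}-1-\lambda Y_j\,\big|\,\mathcal{F}_{j-1}\bigr]\;\le\;\frac{\lambda^2\,\sigma_n^2}{2\,(1-\lambda N_n/3)}.
\]

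The second step is to invoke Proposition~\ref{pinelisadapte} with the sequence of stepsizes $(\lambda\gamma_k)$ and $r=\lambda t$. Because the bound just derived is deterministic, it governs the outer norm of the sum of conditional exponential moments in the right-hand side of Proposition~\ref{pinelisadapte}, so
\[
\mathbb{P}\!\left[\,\Bigl\|\sum_{k=1}^{n-1}\beta_{n-1,k}\gamma_k\xi_{k+1}\Bigr\|\ge t\,\right]\;\le\;2\exp\!\left(-\lambda t+\frac{\lambda^2\sigma_n^2}{2\,(1-\lambda N_n/3)}\right).
\]
I would then optimize in $\lambda$ by the canonical Bernstein choice $\lambda=t/(\sigma_n^2+tN_n/3)$; this choice automatically satisfies $\lambda N_n<3$, and a routine computation (identical to the textbook Bernstein argument) shows that the exponent equals $-t^2/(2(\sigma_n^2+tN_n/3))$, giving exactly the claimed bound.

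Once Proposition~\ref{pinelisadapte} is in hand the proof is essentially mechanical; the only subtle point is that the outer norm on the sum of conditional exponential moments in Proposition~\ref{pinelisadapte} reduces to an ordinary absolute value precisely because the upper bound produced in the first step is deterministic. This is why the hypothesis of the Corollary requires the dominating sequences $(N_n)$ and $(\sigma_n^2)$ to be deterministic.
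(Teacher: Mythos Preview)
Your argument is correct and is exactly the standard Bernstein optimization that the paper defers to via the reference to \cite{Tarres}: the paper gives no independent proof of this corollary, only the remark ``As in \cite{Tarres}, it enables to give a sharp upper bound\ldots''. Your rescaling by $\lambda$, use of $e^x-1-x\le x^2/(2(1-x/3))$, and the choice $\lambda=t/(\sigma_n^2+tN_n/3)$ reproduce that argument verbatim. One small remark: you (correctly) read the hypothesis on $\sigma_n^2$ as a bound on the sum of conditional \emph{second} moments $\sum_k\mathbb{E}[\|\beta_{n-1,k}\gamma_k\xi_{k+1}\|^2\mid\mathcal{F}_k]$; the printed statement omits the square and conditions on $\mathcal{F}_n$, but the application in the proof of Theorem~\ref{interv} (Step~2, choice of $\sigma_n^2$) confirms your reading is the intended one.
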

In order to apply these results, let us linearize the gradient around $m$ in decomposition~(\ref{decphi}),
\begin{align}\label{decdelta}
 Z_{n+1}-m & = Z_{n}-m -\gamma_{n}\Gamma_{m}( Z_{n}-m)+\gamma_{n} \xi_{n+1} -\gamma_{n}\delta_{n},
\end{align}
where $\delta_{n}:=\Phi(Z_{n})-\Gamma_{m}(Z_{n}-m)$ and introduce, for all $n\geq 1$, the following operators:
\begin{align*}
\alpha_{n} & := I_{H}-\gamma_{n}\Gamma_{m}, \\
\beta_{n} & := \prod_{k=1}^{n}\alpha_{k} = \prod_{k=1}^{n}\left(I_{H}-\gamma_{k}\Gamma_{k}\right) , \\
\beta_{0} & := I_{H}.
\end{align*}
By induction, (\ref{decdelta}) yields
\begin{align}
\label{decbeta} Z_{n}-m & = \beta_{n-1}(Z_{1}-m)+\beta_{n-1}M_{n}-\beta_{n-1}R_{n},
\end{align}
with
\begin{align*}
R_{n} & := \sum_{k=1}^{n-1}\gamma_{k}\beta_{k}^{-1}\delta_{k}, \\
M_{n} & := \sum_{k=1}^{n-1}\gamma_{k}\beta_{k}^{-1}\xi_{k+1}.
\end{align*}
\begin{rmq}\label{rmq1}
Note that we make an abuse of notation because $\beta_{k}^{-1}$ does not necessarily exist. However,  if $c_{\gamma} < \frac{1}{C}$,  the linear operator $\beta_{k}^{-1}$ is bounded. Moreover, we can make this abuse because, even if $\beta_{k}$ has not a continuous inverse, we only need to consider $\beta_{n-1}\beta_{k}^{-1}:=\prod_{j=k+1}^{n-1}\left( I_{H} - \gamma_{j}\Gamma_{m} \right)$, which are continuous operators for  $k \leq n-1$.
\end{rmq}
Note that, if $\beta_{k}$ is invertible for all $k\geq 1$, $\left( M_{n} \right)$ is a martingale sequence adapted to the filtration $\left( \mathcal{F}_{n} \right)$. Moreover,
\begin{align}\label{majopznmt}
\notag \mathbb{P}\left[ \| Z_{n}-m \| \geq t \right] & \leq \mathbb{P}\left[ \| \beta_{n-1}M_{n} \| \geq \frac{t}{2} \right] + \mathbb{P}\left[ \| \beta_{n-1}R_{n} \| \geq \frac{t}{4} \right] + \mathbb{P}\left[ \| \beta_{n-1}(Z_{1}-m)\| \geq \frac{t}{4}\right] \\
& \leq \mathbb{P}\left[ \| \beta_{n-1}M_{n} \| \geq \frac{t}{2}\right] +4\frac{\mathbb{E}\left[ \| \beta_{n-1}R_{n}\| \right]}{t}+ 16\frac{\mathbb{E}\left[ \| \beta_{n-1}(Z_{1}-m))\|^{2}\right]}{t^{2}}.
\end{align} 
In this context, Corollary \ref{corpinbern1} can be written as follows:
\begin{cor}\label{corpinbern}
Let $\left( N_{n} \right)_{n \geq 1}$ and $\left( \sigma_{n}^{2} \right)_{n\geq 1}$ be two deterministic sequences such that
\[
N_{n} \geq \sup_{k \leq n-1}\left\| \beta_{n-1}\beta_{k}^{-1}\gamma_{k}\xi_{k+1} \right\| \quad a.s. \quad \text{and} \quad \sigma_{n}^{2} \geq \sum_{k=1}^{n-1}\mathbb{E}\left[ \left\| \beta_{n-1}\beta_{k}^{-1}\gamma_{k}\xi_{k+1} \right\| \big| \mathcal{F}_{n} \right] .
\]
Then, for all $t > 0$ and for all $n \geq 1$,
\[
\mathbb{P}\left[ \left\| \sum_{k=1}^{n-1}\beta_{n-1}\beta_{k}^{-1}\gamma_{k}\xi_{k+1}\right\| \geq t \right] \leq 2 \exp \left( - \frac{t^{2}}{2( \sigma_{n}^{2} + tN_{n} /3 )} \right) .
\]
\end{cor}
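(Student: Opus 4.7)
The plan is to derive Corollary~\ref{corpinbern} as a direct specialization of Corollary~\ref{corpinbern1}, taking the generic doubly indexed family of linear operators there to be $\beta_{n,k} := \beta_n \beta_k^{-1}$ (for $k \leq n$). With this identification, the stated hypotheses on $N_n$ and $\sigma_n^2$ coincide verbatim with those appearing in Corollary~\ref{corpinbern1}, so the announced exponential bound will follow at once.

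The one point that genuinely requires care is to check that $\beta_{n-1}\beta_k^{-1}$ is a well-defined bounded linear operator on $H$, since as pointed out in Remark~\ref{rmq1} the individual $\beta_k$ need not be invertible. Because the factors $I_H - \gamma_j\Gamma_m$ all commute (they are polynomials in the self-adjoint operator $\Gamma_m$), the composite can be rewritten as the finite product $\beta_{n-1}\beta_k^{-1} = \prod_{j=k+1}^{n-1}(I_H - \gamma_j\Gamma_m)$, avoiding any reference to $\beta_k^{-1}$ in isolation. Each factor is bounded because $\|\Gamma_m\| \leq C$ by Corollary~\ref{corgamma}, so the product is a bounded linear operator on $H$ and is therefore admissible in Corollary~\ref{corpinbern1}.

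With this housekeeping in place, I would plug the family $\beta_{n-1,k} := \beta_{n-1}\beta_k^{-1}$ into Corollary~\ref{corpinbern1}: the martingale difference sequence $(\xi_n)$ adapted to $(\mathcal{F}_n)$ was already introduced in Section~\ref{sectiondefi}, and the quantities $\|\beta_{n-1,k}\gamma_k\xi_{k+1}\|$ controlled by $N_n$ and $\sigma_n^2$ in the hypotheses match exactly the quantities in the present statement. The conclusion of Corollary~\ref{corpinbern1} is then literally the inequality we want. Since nothing substantial has to be proved beyond the notational translation above, the only (very minor) obstacle is the abuse of notation surrounding $\beta_k^{-1}$, which Remark~\ref{rmq1} has already addressed. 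This is consistent with the result being stated as a corollary rather than a proposition.
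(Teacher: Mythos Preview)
Your proposal is correct and matches the paper's own treatment exactly: the paper introduces Corollary~\ref{corpinbern} with the sentence ``In this context, Corollary~\ref{corpinbern1} can be written as follows,'' giving no separate proof, so it is indeed just the specialization $\beta_{n-1,k}=\beta_{n-1}\beta_k^{-1}$ of Corollary~\ref{corpinbern1}, with the well-definedness of these operators already justified by Remark~\ref{rmq1}.
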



We can now derive non asymptotic confidence balls for the Robbins Monro algorithm.
\begin{theo}\label{interv}
Assume that \textbf{(A1)-(A3)} hold. There is a positive constant $C$ such that for all $\delta \in (0,1)$, there is a rank $n_{\delta}$ such that for all $n\geq n_{\delta}$,
\begin{align*}
\mathbb{P} \left[ \| Z_{n}-m \|  \leq  \frac{C}{n^{\alpha /2}}\ln \left( \frac{4}{\delta}\right) \right] &\geq 1 - \delta .
\end{align*} 
\end{theo}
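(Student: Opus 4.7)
My starting point is the union bound~\eqref{majopznmt}: it suffices to show that for $t = C n^{-\alpha/2}\ln(4/\delta)$ with a suitably large universal constant $C$, each of the three terms on its right-hand side is at most $\delta/3$ once $n \geq n_\delta$. The common engine is an exponential contraction of the linearized dynamics. Since the operators $I_H - \gamma_k \Gamma_m$ commute and are positive as soon as $c_\gamma$ is small enough, and $\sigma(\Gamma_m) \subset [c_m, C]$ by Corollary~\ref{corgamma}, functional calculus gives, for every $0 \leq k \leq n-1$,
\[
\bigl\| \beta_{n-1}\beta_k^{-1}\bigr\|_{\mathrm{op}} \;\leq\; \prod_{j=k+1}^{n-1}(1-c_m\gamma_j) \;\leq\; \exp\!\Bigl(-c_m\sum_{j=k+1}^{n-1}\gamma_j\Bigr).
\]
In particular $\|\beta_{n-1}\|_{\mathrm{op}} = O(\exp(-c_m c_\gamma n^{1-\alpha}/(1-\alpha)))$, so the initial-condition term $16\,\mathbb{E}[\|\beta_{n-1}(Z_1-m)\|^2]/t^2$ in \eqref{majopznmt} is super-polynomially small and falls below $\delta/3$ for $n \geq n_\delta$.

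\smallskip
\noindent\textbf{Remainder term.} Assumption \textbf{(A3)} makes it legitimate to differentiate $h\mapsto \Gamma_h$ under the expectation, which shows that this map is Lipschitz on bounded subsets of $H$. A second-order Taylor expansion of $\Phi$ around $m$ then yields $\|\delta_k\| \leq C'\|Z_k-m\|^2$, and Theorem~\ref{bonnevitesse} gives $\mathbb{E}[\|\delta_k\|] = O(k^{-\alpha})$. Inserting the exponential contraction above into the definition of $R_n$ and invoking the classical asymptotic estimate
\[
\sum_{k=1}^{n-1}\gamma_k \exp\!\Bigl(-c_m\sum_{j=k+1}^{n-1}\gamma_j\Bigr) k^{-\alpha} \;=\; O\!\bigl(n^{-\alpha}/c_m\bigr),
\]
one obtains $\mathbb{E}[\|\beta_{n-1} R_n\|] = O(n^{-\alpha})$, so that $4\,\mathbb{E}[\|\beta_{n-1}R_n\|]/t = O(n^{-\alpha/2}/\ln(4/\delta))$ is $\leq \delta/3$ for $n \geq n_\delta$.

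\smallskip
\noindent\textbf{Martingale term.} This is the substantial part: apply Corollary~\ref{corpinbern} with $t$ replaced by $t/2$. Since $\|\xi_{k+1}\| \leq 2$ and $\mathbb{E}[\|\xi_{k+1}\|^2\mid \mathcal{F}_k] \leq 1$ by~\eqref{majxi1}, the exponential contraction together with the summation lemma above (once with $f(k)=\gamma_k$ and once with $f(k)=\gamma_k^2$) allows us to choose
\[
N_n \;=\; O(n^{-\alpha}), \qquad \sigma_n^2 \;=\; O(n^{-\alpha}).
\]
Plugging $t = C n^{-\alpha/2}\ln(4/\delta)$ into Corollary~\ref{corpinbern}, the term $tN_n/3$ is dominated by $\sigma_n^2$ as soon as $n \geq n_\delta$, and the exponent reduces to a multiple of $-\ln^2(4/\delta)$, which is smaller than $\ln(\delta/6)$ provided $C$ is chosen large enough (uniformly in $\delta<1$, using $\ln(4/\delta)>\ln 4 > 1$). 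This yields $\mathbb{P}[\|\beta_{n-1}M_n\|\geq t/2]\leq \delta/3$, and combining the three estimates through~\eqref{majopznmt} gives the theorem.

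\smallskip
\noindent\textbf{Main obstacle.} The delicate quantitative step is the sharp evaluation of $N_n$ and $\sigma_n^2$: without carefully exploiting the exponential contraction and the precise decay of $\gamma_k$, extra logarithmic factors would creep in and the Bernstein exponent would no longer be purely of order $\ln^2(4/\delta)$, which is exactly what absorbs the constant $C$ in the final confidence ball. The other nontrivial input is the quadratic Taylor remainder $\|\delta_k\|\leq C'\|Z_k-m\|^2$, for which \textbf{(A3)} is used in an essential way to differentiate $\Gamma_h$ under the expectation.
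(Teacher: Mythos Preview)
Your proposal is correct and follows essentially the same approach as the paper: you use the same three-term decomposition~\eqref{majopznmt}, dispose of the initial-condition term via the exponential contraction $\|\beta_{n-1}\|_{\mathrm{op}} = O(\exp(-c n^{1-\alpha}))$, bound the remainder via $\|\delta_k\|\leq C'\|Z_k-m\|^2$ (the paper's Lemma~\ref{lemdelta}) combined with Theorem~\ref{bonnevitesse}, and handle the martingale term through Corollary~\ref{corpinbern} with $N_n,\sigma_n^2 = O(n^{-\alpha})$. Two very minor discrepancies: the paper does not assume $c_\gamma$ small but instead absorbs the finitely many indices $k<n_0$ into a constant $c_2$ in front of the exponential (cf.~\eqref{majbeta2}), and it allocates the error budget as $\delta/2,\delta/4,\delta/4$ (matching the split $t/2,t/4,t/4$ already built into~\eqref{majopznmt}) rather than your $\delta/3$ apiece, then inverts the Bernstein bound directly via $t\geq 4(N_n/3+\sigma_n)\ln(4/\delta)$ instead of plugging in and checking; neither point affects the substance.
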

\begin{rmq}\label{rmq:Pinelis}
Note that we could not apply Theorem 3.1 in \cite{Pinelis} to the martingale term $M_{n}= \sum_{k=1}^{n-1}\beta_{k}^{-1}\gamma_{k}\xi_{k+1}$. In fact, two problems are encountered. First, as written in Remark \ref{rmq1}, $\beta_{k}^{-1}$ does not necessarily exist. The second problem is that although there is a positive constant $M$ such that $\left\| \beta_{n-1}M_{n} \right\| \leq M$ for all $n \geq 1$, the sequence $\left\| \beta_{n-1} \right\| \left\|  M_{n} \right\|$ may not be convergent ($\left\| \beta_{n-1} \right\|$ denotes the usual spectral norm of operator $\beta_{n-1}$).
\end{rmq}
\subsection{Non asymptotic confidence balls for the averaged algorithm:}
As  in \cite{HC} and \cite{Pel00}, we make use of decomposition (\ref{decdelta}).
By summing 
and applying Abel's transform, we get:
\begin{equation}\label{decinterv}
\Gamma_{m}\overline{T}_{n} = \frac{1}{n}\left( \frac{T_{1}}{\gamma_{1}}-\frac{T_{n+1}}{\gamma_{n}}+ \sum_{k=2}^{n} T_{k}\left[ \frac{1}{\gamma_{k}}-\frac{1}{\gamma_{k+1}} \right] - \sum_{k=1}^{n}\delta_{k} \right) + \frac{1}{n}\widehat{M}_{n+1} ,
\end{equation}
with
\begin{align*}
T_{n} & := Z_{n}-m , \\
\overline{T}_{n} & := \overline{Z}_{n} -m \\
\widehat{M}_{n+1} & := \sum_{k=1}^{n}\xi_{k+1} .
\end{align*}
The last term is the martingale term. Applying Pinelis-Bernstein's Lemma (see \cite{Tarres}, Appendix A) to this term and showing that the other ones are negligible, we get the following non asymptotic confidence balls.
\begin{theo}\label{theointervmoyennise}
Assume that \textbf{(A1)-(A3)} hold. For all $\delta \in (0,1)$, there is a rank $n_{\delta}$ such that for all $n \geq n_{\delta}$,
\begin{align*}
\mathbb{P}\left[ \left\| \Gamma_{m} (\overline{Z}_{n}-m )  \right\|  \leq 4 \left( \frac{2}{3n}+\frac{1}{\sqrt{n}}\right) \ln \left(\frac{4}{\delta}\right)\right] &\geq 1-\delta.
\end{align*}
\end{theo}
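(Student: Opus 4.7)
The plan is to work directly from decomposition~(\ref{decinterv}), splitting $\Gamma_{m}\overline{T}_{n}$ into its martingale part $\frac{1}{n}\widehat{M}_{n+1}$ and four deterministic-looking remainders, and to: (i) apply the Pinelis--Bernstein lemma of \cite{Tarres} (Appendix~A) to the martingale term to obtain a tail bound of order $\sqrt{\ln(4/\delta)/n}+\ln(4/\delta)/n$ with failure probability $\delta/2$; and (ii) control each of the four remainders through Markov's inequality, showing that for $n$ sufficiently large each of them is negligible with respect to the target rate $n^{-1/2}\ln(4/\delta)$.

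For step~(i), recall from~(\ref{majxi1}) that $\|\xi_{k+1}\|\leq 2$ and $\mathbb{E}[\|\xi_{k+1}\|^{2}\mid\mathcal{F}_{k}]\leq 1$. Applying the Pinelis--Bernstein lemma with $N=2$ and $\sigma_{n+1}^{2}=n$ gives, for every $r>0$,
\[
\mathbb{P}\!\left[\,\|\widehat{M}_{n+1}\|\geq r\,\right]\leq 2\exp\!\left(-\frac{r^{2}}{2(n+2r/3)}\right).
\]
Equating the right-hand side to $\delta/2$ yields $r\leq \sqrt{2n\ln(4/\delta)}+\tfrac{4}{3}\ln(4/\delta)$; dividing by $n$ and using $\sqrt{L}\leq L$ for $L=\ln(4/\delta)\geq \ln 4>1$, one obtains, on an event of probability $\geq 1-\delta/2$,
\[
\frac{1}{n}\|\widehat{M}_{n+1}\|\leq \frac{\sqrt{2}\,L}{\sqrt{n}}+\frac{4L}{3n},
\]
which matches the leading behaviour of the stated bound with some room to spare.

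For step~(ii), the four deterministic terms in~(\ref{decinterv}) are estimated as follows. The term $\gamma_{1}^{-1}T_{1}/n$ is deterministic and of order $1/n$ since $Z_{1}$ is bounded. Using $\gamma_{n}=c_{\gamma}n^{-\alpha}$ together with the $L^{2}$ rate $\mathbb{E}\|T_{k}\|^{2}=O(k^{-\alpha})$ from Theorem~\ref{bonnevitesse}, the term $\gamma_{n}^{-1}T_{n+1}/n$ is $O(n^{\alpha/2-1})$ in $L^{2}$, which is $o(n^{-1/2})$ because $\alpha<1$. The Abel-type sum $\frac{1}{n}\sum_{k=2}^{n}T_{k}(\gamma_{k}^{-1}-\gamma_{k+1}^{-1})$ is controlled by noting that $\gamma_{k+1}^{-1}-\gamma_{k}^{-1}=O(k^{\alpha-1})$; summing against the $L^{2}$ rate produces again an $L^{2}$ norm of order $n^{\alpha/2-1}=o(n^{-1/2})$. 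Finally, under assumption~\textbf{(A3)}, a Taylor expansion of $\Phi$ around $m$ gives the quadratic remainder bound $\|\delta_{k}\|\leq C\|T_{k}\|^{2}$, whence $\mathbb{E}\|\delta_{k}\|=O(k^{-\alpha})$ and $\mathbb{E}\bigl\|\tfrac{1}{n}\sum_{k=1}^{n}\delta_{k}\bigr\|=O(n^{-\alpha})=o(n^{-1/2})$ for $\alpha>1/2$. A Markov step at level $\delta/8$ on each of the four terms, combined with~(i) through a union bound, then delivers the conclusion for $n\geq n_{\delta}$ with $n_{\delta}$ chosen large enough that the four $o(n^{-1/2})$ remainders are absorbed into the slack $(4-\sqrt{2})\ln(4/\delta)/\sqrt{n}+\tfrac{4}{3}\ln(4/\delta)/n$ between the martingale bound and the theorem's target.

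The main obstacle is the Taylor remainder estimate $\|\delta_{k}\|\leq C\|T_{k}\|^{2}$, since a cruder linear bound would destroy the $n^{-1/2}$ scaling; this is exactly where~\textbf{(A3)} enters, providing enough regularity of $h\mapsto\Gamma_{h}$. It is also the reason why the clean, globally-valid $L^{2}$ rate in Theorem~\ref{bonnevitesse} (without the logarithmic loss or localization event of \cite{HC}) is indispensable: any $\log n$ factor or restriction to a set of high but non-full probability would translate into an unwanted deterioration at the $n^{-1/2}$ scale and prevent the production of a genuine non-asymptotic confidence ball.
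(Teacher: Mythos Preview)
Your proposal is correct and follows essentially the same route as the paper: you use decomposition~(\ref{decinterv}), apply the Pinelis--Bernstein inequality to $\frac{1}{n}\widehat{M}_{n+1}$ with $N=2$ and $\sigma_{n}^{2}=n$, and control the four remainder terms via Markov's inequality using the $L^{2}$ rate of Theorem~\ref{bonnevitesse} together with the quadratic bound $\|\delta_{k}\|\leq C\|T_{k}\|^{2}$ (which is Lemma~\ref{lemdelta} in the paper). The only cosmetic differences are in bookkeeping: the paper groups two of the remainders sharing the rate $n^{-(1-\alpha/2)}$ and splits the failure probability as $\delta/2+3\times\delta/6$ rather than your $\delta/2+4\times\delta/8$, and it inverts the Bernstein bound following \cite{Tarres} to land exactly on the constant~$4$, whereas you obtain $\sqrt{2}$ and $4/3$ for the martingale part and absorb the remainders into the resulting slack---both lead to the same $n_{\delta}$ of polynomial order in $1/\delta$.
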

Since the smallest eigenvalue $\lambda_{\min}$ of $\Gamma_{m}$ is strictly positive, 
\begin{align*}
\mathbb{P} \left[  \left\| \overline{Z}_{n}-m \right\|  \leq \frac{4}{\lambda_{\min}} \left( \frac{2}{3n}+\frac{1}{\sqrt{n}}\right) \ln \left( \frac{4}{\delta} \right) \right] &\geq 1-\delta .
\end{align*}

\begin{rmq} We can also have a more precise form of the rank $n_\delta$ (see the Proof of Theorem~\ref{theointervmoyennise}):
\begin{align}
n_{\delta} &:= \max \left\lbrace \left(\frac{6C_{1}'}{\delta \ln \left( \frac{4}{\delta}\right)} \right)^{\frac{1}{1/2 - \alpha /2}} , \left(\frac{6C_{2}'}{\delta \ln \left( \frac{4}{\delta}\right)}\right)^{\frac{1}{\alpha -1/2}}  , \left(\frac{6C_{3}'}{\delta \ln \left( \frac{4}{\delta}\right)}\right)^{\frac{1}{2}}  \right\rbrace,
\label{def:ndelta}
\end{align}
where $C_{1}', C_{2}'$ and $C_{3}'$ are constants. We can remark that the first two terms are the leading ones and if the rate $\alpha$ is chosen equal to $2/3$, they are of the same order that is $n_{\delta} = O \left(\frac{-1}{\delta \ln \delta} \right)^6$.
\end{rmq}
\begin{rmq} We can make an informal comparison of previous result with the central limit theorem stated in  (\cite{HC}, Theorem 3.4),  even if the latter result is of asymptotic nature. Under assumptions \textbf{(A1)-(A3)}, they have shown that
\[
\sqrt{n} \left( \overline{Z}_n - m \right)  \cvl
  \mathcal{N} \left(0, \Gamma_m^{-1}\Sigma \Gamma_m^{-1}\right),
\]
with, 
\[
\Sigma =  \EE{\frac{(X-m)}{\nrm{X-m}} \otimes \frac{(X-m)}{\nrm{X-m}}}.
\]
This implies, with the continuity of the norm in $H$, that for all $t>0$,
\[
\lim_{n\to \infty}\mathbb{P}\left[ \nrm{ \sqrt{n} \left( \overline{Z}_n - m \right)} \geq t \right]=\mathbb{P}\left[ \nrm{ V } \geq t \right],
\]
where $V$ is a centered $H$-valued Gaussian random vector with covariance operator $\Delta_V = \Gamma_m^{-1}\Sigma \Gamma_m^{-1}$. Operator $\Delta_V$ is self-adjoint and non negative, so that it admits a spectral decomposition $\Delta_V = \sum_{j \geq 1} \eta_j v_j \otimes v_j$,
where $\eta_1 \geq \eta_2 \geq .... \geq 0$ is the sequence of ordered eigenvalues associated to the orthonormal eigenvectors $v_1, v_2, \ldots$ 
Using the Karhunen-Lo\`eve expansion of $V$, we directly get that 
\begin{align*}
\nrm{V}^2 & = \sum_{j \geq 1} \eta_j^2 V_j^2 
\end{align*}
where $V_1, V_2, \ldots$ are {\it i.i.d.} centered Gaussian variables with unit variance. Thus the distribution of $\nrm{V}^2$ is a mixture of independent Chi-square random variables with one degree of freedom. Computing the quantiles of $\nrm{V}$ to build confidence balls would require to know, or to estimate,  all the (leading) eigenvalues of the rather complicated operator $\Delta_V$ and this is not such an easy task. 

On the other hand, the use of the confidence balls given in Theorem~\ref{theointervmoyennise} only requires the knowledge of $\lambda_{\min}$. This eigenvalue is not difficult to estimate since it can also be written as
\[
\lambda_{\min} = \EE{\frac{1}{\nrm{X-m}}} - \lambda_{\max} \left(\EE{\frac{1}{\nrm{X-m}^3} (X-m)\otimes (X-m)}\right),
\]
where $\lambda_{\max}(A)$ denotes the largest eigenvalue of operator $A$. 
\end{rmq}
\begin{rmq} Under previous assumptions and the additional condition $\alpha>2/3$, 
 it can be shown with  decomposition (\ref{decinterv}) that there is a positive constant $C'$ such that
\begin{align*}
 \mathbb{E}\left[ \| \overline{Z}_{n}-m \|^{2} \right] &\leq \frac{C'}{n}.
\end{align*}
The averaged algorithm converges at the parametric rate of convergence in quadratic mean. 
\end{rmq}

\section{Proofs}\label{sectionproof}
\subsection{Proofs of the results given in Section \ref{sectionrateofconvergence}}
In order to prove Lemma \ref{majexp}, we have to introduce a technical lemma which controls the remainder term  $\| \delta_{n} \|$ (see eq. \ref{decdelta}) appearing in the Taylor approximation. This will enable us to bound the term $\beta_{n-1}R_{n}$ in decomposition (\ref{decbeta}).
\begin{lem}\label{lemdelta}
Assuming assumption \textbf{(A3)}, there is a constant $C_{m}$ such that for all $n\geq 1$, almost surely:
\begin{equation}
\label{majdelta} \| \delta_{n} \| \leq C_{m}\| Z_{n}-m\|^{2},
\end{equation}
where $\delta_{n} := \Phi (Z_{n}) - \Gamma_{m}(Z_{n}-m)$.
\end{lem}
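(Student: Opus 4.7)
The plan is to reduce the almost-sure bound to a deterministic second-order Taylor inequality on $\Phi$. Since $\Phi(m) = 0$ and $\Gamma_m$ is the Fréchet derivative of $\Phi$ at $m$, we have $\delta_n = \Phi(Z_n) - \Phi(m) - \Gamma_m(Z_n - m)$, so it is enough to prove the pointwise estimate $\|\Phi(h) - \Phi(m) - \Gamma_m(h-m)\| \leq C_m \|h-m\|^2$ for every $h \in H$ and then evaluate at $h = Z_n(\omega)$. The integral form of the Taylor remainder along the segment from $m$ to $h$ yields
\begin{equation*}
\Phi(h) - \Phi(m) - \Gamma_m(h-m) = \int_0^1 \bigl(\Gamma_{m+t(h-m)} - \Gamma_m\bigr)(h-m)\, dt,
\end{equation*}
so the lemma will follow once a Lipschitz estimate for the Hessian is established: there exists $\kappa > 0$ such that $\|\Gamma_{h'} - \Gamma_m\|_{\mathrm{op}} \leq \kappa \|h' - m\|$ for every $h' \in H$.

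To obtain this Lipschitz bound I would work with the pointwise integrand $\gamma_X(h) := \|X-h\|^{-1}(I_H - P_{X-h})$, where $P_u := (u \otimes u)/\|u\|^2$, and use the algebraic splitting
\begin{equation*}
\gamma_X(h') - \gamma_X(m) = \Bigl(\tfrac{1}{\|X-h'\|} - \tfrac{1}{\|X-m\|}\Bigr) I_H + \tfrac{P_{X-m} - P_{X-h'}}{\|X-m\|} + P_{X-h'}\Bigl(\tfrac{1}{\|X-m\|} - \tfrac{1}{\|X-h'\|}\Bigr).
\end{equation*}
The scalar differences are controlled via $\bigl|\|X-h'\|^{-1} - \|X-m\|^{-1}\bigr| \leq \|h'-m\|/(\|X-h'\|\,\|X-m\|)$, which follows from the reverse triangle inequality; the projector difference is controlled via the standard estimate $\|P_u - P_v\|_{\mathrm{op}} \leq 4\|u-v\|/\max(\|u\|,\|v\|)$, derived by writing $P_u = \hat u \otimes \hat u$ with $\hat u = u/\|u\|$ and expanding. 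Each resulting summand is bounded by $\|h'-m\|$ multiplied by one of $\|X-h'\|^{-2}$, $\|X-m\|^{-2}$, or $(\|X-h'\|\,\|X-m\|)^{-1}$, and assumption \textbf{(A3)} combined with Cauchy--Schwarz gives uniform bounds on the expectation of each of these, yielding
\begin{equation*}
\|\Gamma_{h'}-\Gamma_m\|_{\mathrm{op}} \leq \mathbb{E}\|\gamma_X(h')-\gamma_X(m)\|_{\mathrm{op}} \leq \kappa \|h' - m\|
\end{equation*}
for some constant $\kappa$ depending only on the $C$ of \textbf{(A3)}.

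Plugging this Lipschitz estimate back into the integral remainder and computing $\int_0^1 t\,dt = 1/2$ produces $\|\delta_n\| \leq (\kappa/2)\|Z_n-m\|^2$ almost surely, so the lemma holds with $C_m := \kappa/2$. I expect the main obstacle to be the pointwise algebraic decomposition: one has to keep track of three different singular factors ($\|X-h'\|^{-1}$, $\|X-m\|^{-1}$, and their product) and verify that, after applying Cauchy--Schwarz, every one of them is controlled by the single integrability hypothesis \textbf{(A3)}. Once this operator-norm Lipschitz bound is in hand, the passage to the statement of the lemma via the Taylor remainder formula is essentially automatic.
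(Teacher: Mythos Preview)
Your proposal is correct and follows essentially the same route as the paper: both use the integral Taylor remainder $\delta_n=\int_0^1(\Gamma_{m+t(Z_n-m)}-\Gamma_m)(Z_n-m)\,dt$ and then establish a Lipschitz-type control on $h\mapsto\Gamma_h$ via assumption \textbf{(A3)}. The only cosmetic difference is that the paper differentiates $t\mapsto\Gamma_{m+th}(h')$ and bounds the derivative (product rule on $U_h(t)=\|X-m-th\|^{-1}$ and $V_{h,h'}(t)=h'-P_{X-m-th}h'$), whereas you bound the finite difference $\Gamma_{h'}-\Gamma_m$ directly; the resulting singular terms ($\|X-h\|^{-2}$ and the mixed product) and the use of Cauchy--Schwarz with \textbf{(A3)} are identical, and you even gain a harmless factor $1/2$ from $\int_0^1 t\,dt$.
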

\begin{proof}[ Proof of Lemma \ref{lemdelta}]
Using Taylor's theorem with remainder of integral form, almost surely
\begin{equation}
\Phi (Z_{n})=\int_{0}^{1}\Gamma_{m+t(Z_{n}-m)}(Z_{n}-m)dt,
\end{equation}
and 
\[\delta_{n}=\int_{0}^{1}\left( \Gamma_{m+t(Z_{n}-m)}-\Gamma_{m}\right)(Z_{n}-m)dt.\] 
For all $h,h' \in H$, we denote by $\varphi_{h,h'}$ the function defined as follows:
\begin{align*}
\varphi_{h,h'}:[0,1] & \longrightarrow H \\
t & \longmapsto \varphi_{h,h'}(t):=\Gamma_{m+th}(h').
\end{align*}
Let $U_{h}:[0,1]\longrightarrow \mathbb{R}_{+}$ and $V_{h,h'}:[0,1]\longrightarrow H$  be two random functions defined for all $t\in [0,1]$ by
\begin{align*}
U_{h}(t) & := \frac{1}{\| X-m-th \|}, \\
V_{h,h'}(t) & := h'-\frac{\langle X-m-th,h' \rangle (X-m-th)}{\| X-m-th\|^{2}}.
\end{align*} 
Let $V_{h,h'}'(t)= \frac{d}{dt}V_{h,h'}(t)=\lim_{t'\rightarrow 0} \frac{v_{h,h'}(t+t')-v_{h,h'}(t)}{t'}$ and $U_{h,}'(t)= \frac{d}{dt}U_{h}(t) = \lim_{t' \rightarrow 0} \frac{u_{h,h'}(t+t') - u_{h,h'}(t)}{t}$. Let $\varphi_{h,h'}'(t) = \frac{d}{dt}\varphi_{h,h'}(t)$, by dominated convergence, $\varphi_{h,h'}$ is differentiable on $[0,1]$ and
$ \| \varphi_{h,h'}'(t)\| \leq \mathbb{E}\left[ | U_{h}'(t) | \| V_{h,h'}(t) \| + | U_{h}(t) | \| V_{h,h'}'(t) \| \right]$. Using Cauchy-Schwarz inequality, 
\begin{align*}
\left| U_{h}(t)\right| & = \frac{1}{\| X-m-th \|},\\
|U_{h}'(t)| & \leq \frac{\| h \|}{\| X-m-th\|^{2}},\\
\| V_{h,h'}(t) \| & \leq 2 \| h' \| ,\\
\| V_{h,h'}'(t) \| & \leq \frac{4\| h \| \| h' \|}{\| X-m-th \|}.
\end{align*}   
Finally, using assumption \textbf{(A3)}, 
\begin{align*}
\| \varphi_{h,h'}(t) \| & \leq 6 \| h \| \| h' \| \mathbb{E}\left[ \frac{1}{\| X-m-th \|^{2}}\right] \\
& \leq 6 \| h \| \| h' \| C.
\end{align*}
Using previous inequalities, we obtain that for all $h\in H$
\begin{align*}
\| \Phi(m+h) -\Gamma_{m}(h)\| & \leq \int_{0}^{1}\left\| \Gamma_{m+th}(h)-\Gamma_{m}(h) \right\|dt \\
& \leq \int_{0}^{1}\| \varphi_{h,h}(t)-\varphi_{h,h}(0)\| dt \\
& \leq \int_{0}^{1}\sup_{t'\in [0,t]}\| \varphi_{h,h}'(t')\| dt \\
& \leq 6C \| h \|^{2}.
\end{align*}
Taking $h=Z_{n}-m$, for all $n \geq 1$:
\[
 \| \delta_{n} \| \leq C_{m} \| Z_{n}-m \|^{2},
\]
with $C_{m}=6C$.
\end{proof}

We can now prove Lemma \ref{majexp}.

\begin{proof}[Proof of Lemma \ref{majexp}.] 
We need to study the asymptotic behaviour of the sequence of operators $(\beta_{n})_n$. Since $\Gamma_{m}$ admits a spectral decomposition, we have the upper bound $\| \alpha_{k}\| \leq \sup_{j}|1-\gamma_{k}\lambda_{j}|$ where $\left(\lambda_{j}\right)$ is the sequence of eigenvalues of $\Gamma_{m}$. Since for all $j \geq 1$ we have $0< c_{m}\leq \lambda_{j} \leq C$, there is a rank $n_{0}$ such that for all $n\geq n_{0}, \gamma_{n}C<1$. In particular, for all $n\geq n_{0}$ we have $\| \alpha_{n} \| \leq 1-\gamma_{n}c_{m}$. Thus, there is a positive constant $c_{1}$ such that for all $n\geq 1$:
\begin{equation}\label{majbeta1}
\| \beta_{n-1} \| \leq c_{1}\exp \left( -\lambda_{\min}\sum_{k=1}^{n-1}\gamma_{k}\right) \leq c_{1}\exp \left( -c_{m}\sum_{k=1}^{n-1}\gamma_{k}\right) ,
\end{equation}
where $\lambda_{\min}>0$ is the smallest eigenvalue of $\Gamma_{m}$ and $\| \beta_{n-1} \|$ denotes the spectral norm of operator $\beta_{n-1}$. Similarly, there is a positive constant $c_{2}$ such that for all integer $n$ and for all integer $k \leq n-1$:
\begin{equation}\label{majbeta2}
\left\| \beta_{n-1}\beta_{k}^{-1}\right\| \leq c_{2}\exp \left( -c_{m}\sum_{j=k+1}^{n-1}\gamma_{j} \right) .
\end{equation}
Moreover, for all $n > n_{0}$ and $k\geq n_{0}$ such that $k \leq n-1$,
\begin{equation}
\label{majbeta} \left\| \beta_{n-1}\beta_{k}^{-1}\right\| \leq \exp \left( -c_{m}\sum_{j=k+1}^{n-1}\gamma_{j}\right) ,
\end{equation}
see \cite{HC} for more details. Using decomposition (\ref{decbeta}) again, we get
\begin{equation}\label{maj3}
\mathbb{E}\left[ \| Z_{n}-m \|^{2}\right] \leq 3 \mathbb{E}\left[ \| \beta_{n-1}(Z_{1}-m)\|^{2}\right] +3\mathbb{E}\left[ \| \beta_{n-1}M_{n}\|^{2}\right] +3\mathbb{E}\left[ \| \beta_{n-1}R_{n} \|^{2}\right] .
\end{equation}
We now bound each term at the right-hand side of previous inequality. \\

\noindent \textit{Step 1: The quasi-deterministic term:} Using inequality (\ref{majbeta1}), with help of an integral test for convergence, for all $n \geq 1$:
\begin{align*}
\mathbb{E}\left[ \| \beta_{n-1} (Z_{1}-m ) \|^{2}\right] & \leq c_{1}^{2}\exp \left( -2c_{m}\sum_{k=1}^{n-1}\gamma_{k}\right) \mathbb{E}\left[ \| Z_{1}-m \|^{2} \right] \\
& \leq c_{1}^{2}\left( -2c_{m}c_{\gamma}\int_{1}^{n}t^{-\alpha}dt \right) \mathbb{E}\left[ \| Z_{1}-m \|^{2} \right] \\
& \leq c_{1}^{2}M\exp \left( 2\frac{c_{m}c_{\gamma}}{1-\alpha}\right)\exp\left( -2\frac{c_{m}c_{\gamma}}{1-\alpha}n^{1-\alpha}\right) .
\end{align*}
Since $\alpha <1$, this term converges exponentially fast to $0$.\\

\noindent \textit{Step 2: The martingale term:} We have
\begin{align*}
\| \beta_{n-1}M_{n}\|^{2} & = \left\| \sum_{k=1}^{n-1}\gamma_{k}\beta_{n-1}\beta_{k}^{-1}\xi_{k+1}\right\|^{2} \\
& = \sum_{k=1}^{n-1}\gamma_{k}^{2}\left\| \beta_{n-1}\beta_{k}^{-1}\xi_{k+1}\right\|^{2}+2\sum_{k=1}^{n-1}\sum_{k'<k}\gamma_{k}\gamma_{k'}\langle \beta_{n-1}\beta_{k}^{-1}\xi_{k+1},\beta_{n-1}\beta_{k'}^{-1}\xi_{k'+1}\rangle \\
& \leq \sum_{k=1}^{n-1}\gamma_{k}^{2}\left\| \beta_{n-1}\beta_{k}^{-1}\right\|^{2}\| \xi_{k+1}\|^{2}+2\sum_{k=1}^{n-1}\sum_{k'<k}\gamma_{k}\gamma_{k'}\langle \beta_{n-1}\beta_{k}^{-1}\xi_{k+1},\beta_{n-1}\beta_{k'}^{-1}\xi_{k'+1}\rangle .
\end{align*}
Since $\left( \xi_{n}\right)$ is a sequence of martingale differences, for all $k'<k$ we have 
\[\mathbb{E}\left[ \langle \xi_{k+1},\xi_{k'+1}\rangle \right]=\mathbb{E}\left[ \mathbb{E}\left[ \langle \xi_{k+1},\xi_{k'+1}\rangle |\mathcal{F}_{k}\right]\right]=\mathbb{E}\left[ \langle \mathbb{E}\left[ \xi_{k+1}|\mathcal{F}_{k}\right],\xi_{k'+1}\rangle \right] =0.\]
 Thus, 
\begin{equation}
\mathbb{E}\left[ \| \beta_{n-1}M_{n}\|^{2}\right] \leq \sum_{k=1}^{n-1}\gamma_{k}^{2}\left\| \beta_{n-1}\beta_{k}^{-1}\right\|^{2},
\end{equation}
because for all $k \in \mathbb{N}$, $\mathbb{E}\left[ \| \xi_{k+1}\|^{2}\right] \leq 1$. The term $\| \beta_{n-1}\beta_{k}^{-1}\|$ converges exponentially fast  to $0$ when $k$ is small enough compared to $n$. We denote by $E(.)$ the integer function and we isolate the term which gives the rate of convergence. Let us split the sum into two parts: 
\begin{equation}
\label{split}\sum_{k=1}^{n-1}\gamma_{k}^{2}\left\| \beta_{n-1}\beta_{k}^{-1}\right\|^{2} = \sum_{k=1}^{E(n/2)-1}\gamma_{k}^{2}\| \beta_{n-1}\beta_{k}^{-1}\|^{2} + \sum_{k=E(n/2)}^{n-1}\gamma_{k}^{2}\| \beta_{n-1}\beta_{k}^{-1}\|^{2}.
\end{equation}
We shall show that the first term on the right-hand side on (\ref{split}) converges exponentially fast to $0$ and that the second term on the right-hand side converges at the rate $\frac{1}{n^{\alpha}}$. Indeed, we deduce from inequality (\ref{majbeta2}):
\begin{align*}
\sum_{k=1}^{E(n/2)-1}\gamma_{k}^{2}\left\| \beta_{n-1}\beta_{k}^{-1}\right\|^{2} & \leq c_{2}\sum_{k=1}^{E(n/2)-1}\gamma_{k}^{2}e^{-2c_{m}\sum_{j=k+1}^{n-1}\gamma_{j}} \\
& \leq c_{2}\sum_{k=1}^{E(n/2)-1}\gamma_{k}^{2}e^{-2c_{m}\frac{n}{2}\frac{c_{\gamma}}{n^{\alpha}}}\\
& \leq c_{2}e^{-c_{m}c_{\gamma}n^{1-\alpha}}\sum_{k=1}^{E(n/2)-1}\gamma_{k}^{2}.
\end{align*}
Since $\sum \gamma_{k}^{2}<\infty$, we get 
\[\sum_{k=1}^{E(n/2)-1}\gamma_{k}^{2}\| \beta_{n-1}\beta_{k}^{-1}\|^{2}=O\left( e^{-c_{m}c_{\gamma}n^{1-\alpha}}\right).\]
We now bound the second term at the right-hand side of (\ref{split}). Using inequality (\ref{majbeta}), for all $n > 2n_{0}$:
\begin{align*}
\sum_{k=E(n/2)}^{n-1}\gamma_{k}^{2}\left\| \beta_{n-1}\beta_{k}^{-1}\right\|^{2} & \leq \sum_{k=E(n/2)}^{n-2}\gamma_{k}^{2}e^{-2c_{m}\sum_{j=k+1}^{n-1}\gamma_{j}} +\gamma_{n-1}^{2}\\
& \leq c_{\gamma}\left( \frac{1}{E(n/2)}\right)^{\alpha}\sum_{k=E(n/2)}^{n-2}\gamma_{k}e^{-2c_{m}\sum_{j=k+1}^{n-1}\gamma_{j}} +\gamma_{n-1}^{2} \\
& \leq \frac{2^{\alpha}c_{\gamma}}{n^{\alpha}}\sum_{k=E(n/2)}^{n-2}\gamma_{k}e^{-2c_{m}\sum_{j=k+1}^{n-1}\gamma_{j}}+ \gamma_{n-1}^{2}.
\end{align*}
Moreover, for all $n>2n_{0}$ and $k\leq n-2$:
\[\sum_{j=k+1}^{n-1}\gamma_{j}\leq \int_{k+1}^{n}\frac{c_{\gamma}}{s^{\alpha}}ds = \frac{c_{\gamma}}{1-\alpha}\left[ n^{1-\alpha}-(k+1)^{1-\alpha}\right] ,
\]
and hence $e^{-2c_{m}\sum_{j=k+1}^{n-1}\gamma_{j}}\leq e^{-2c_{m}\frac{c_{\gamma}}{1-\alpha}\left[ n^{1-\alpha}-(k+1)^{1-\alpha}\right]}$. Since $\frac{1}{k^{\alpha}}\leq \frac{2}{(k+1)^{\alpha}}$,
\begin{align*}
\sum_{k=E(n/2)}^{n-2}\gamma_{k}e^{2c_{m}\frac{c_{\gamma}}{1-\alpha}(k+1)^{1-\alpha}} & \leq 2^{\alpha}c_{\gamma}\sum_{k=E(n/2)}^{n-2}\frac{1}{(k+1)^{\alpha}}e^{2c_{m}\frac{c_{\gamma}}{1-\alpha}(k+1)^{1-\alpha}} \\
& \leq 2^{\alpha}c_{\gamma}\int_{E(n/2)}^{n-1}\frac{1}{(t+1)^{\alpha}}e^{2c_{m}\frac{c_{\gamma}}{1-\alpha}(t+1)^{1-\alpha}}dt \\
& \leq \frac{2^{\alpha}}{2c_{m}}e^{2c_{m}\frac{c_{\gamma}}{1-\alpha}n^{1-\alpha}} \\
& \leq \frac{2^{\alpha -1}}{c_{m}}e^{2c_{m}\frac{c_{\gamma}}{1-\alpha}n^{1-\alpha}}.
\end{align*}
Note that the integral test for convergence is valid because there is a rank $n_{0}\geq 1$ such that the function $t \longmapsto \frac{1}{(t+1)^{\alpha}}e^{2c_{m}\frac{c_{\gamma}}{1-\alpha}(t+1)^{1-\alpha}}$ is increasing on $[n_{0}',\infty )$. Let $n_{1}:=\max \lbrace 2n_{0}+1,n_{0}'\rbrace$, for all $n\geq n_{1}$:
\begin{equation}
\sum_{k=E(n/2)}^{n-1}\gamma_{k}^{2}\left\| \beta_{n-1}\beta_{k}^{-1}\right\|^{2} \leq \frac{2^{2\alpha -1}c_{\gamma}}{c_{m}}\frac{1}{n^{\alpha}} + c_{\gamma}2^{2\alpha}\frac{1}{n^{2\alpha}}.
\end{equation}
Consequently, there is a positive constant $C_{2}$ such that for all $n \geq 1$,
\begin{equation}\label{majmart}
3\mathbb{E}\left[ \| \beta_{n-1}M_{n}\|^{2}\right] \leq C_{2}\frac{1}{n^{\alpha}}.
\end{equation}
\newline
\textit{Step 3: The remainder term}. 
In the same way, we split the sum into two parts:
\begin{equation}
\label{splitdelta} \sum_{k=1}^{n-1}\gamma_{k}\beta_{n-1}\beta_{k}^{-1}\delta_{k} = \sum_{k=1}^{E(n/2)-1}\gamma_{k}\beta_{n-1}\beta_{k}^{-1}\delta_{k} + \sum_{k=E(n/2)}^{n-1}\gamma_{k}\beta_{n-1}\beta_{k}^{-1}\delta_{k}.
\end{equation}
It can be checked (see the proof of Lemma \ref{lemmajopourrie} for more details) that there is a positive constant $M$ such that for all $n\geq 1$,
\begin{equation}
\mathbb{E}\left[ \| Z_{n}-m \|^{4}\right] \leq M.
\end{equation}
Moreover, by Lemma \ref{lemdelta}, almost surely $\| \delta_{n}\| \leq C_{m}\| Z_{n}-m \|^{2}$. Thus, for all $k,k' \geq 1$, applying Cauchy-Schwarz's inequality, 
\begin{align*}
\mathbb{E} \left[ \| \delta_{k}\| \| \delta_{k'}\| \right] & \leq C_{m}^{2}\mathbb{E}\left[ \| Z_{k}-m \|^{2}\| Z_{k'}-m \|^{2} \right] \\
& \leq C_{m}^{2}\sqrt{ \mathbb{E}\left[ \| Z_{k}-m \|^{4} \right]}\sqrt{ \mathbb{E}\left[ \| Z_{k'}-m\|^{4}\right]} \\
& \leq C_{m}^{2}\sup_{n\geq 1}\mathbb{E}\left[ \| Z_{n}-m\|^{4}\right] \\
& \leq C_{m}^{2}M.
\end{align*}  
As a particular case, we also have $\mathbb{E}\left[ | \langle \delta_{k},\delta_{k'}\rangle | \right] \leq C_{m}^{2}M$. Applying this result to the term on the right-hand side of (\ref{splitdelta}),
\begin{align*}
\mathbb{E}\left[ \left\| \sum_{k=1}^{E(n/2)-1}\gamma_{k}\beta_{n-1}\beta_{k}^{-1}\delta_{k}\right\|^{2}\right] & \leq C_{m}^{2}M \left[ \sum_{k=1}^{E(n/2)-1}\gamma_{k}\| \beta_{n-1}\beta_{k}^{-1}\| \right]^{2} \\
& \leq c_{2}C_{m}^{2}Me^{-2c_{m}c_{\gamma}n^{1-\alpha}}\left( \sum_{k=1}^{E(n/2)-1}\gamma_{k}\right)^{2} \\
& \leq C_{1}'e^{-2c_{m}c_{\gamma}n^{1-\alpha}}n^{2-2\alpha}.
\end{align*}
This term converges exponentially fast to $0$. To bound the second term, we use the same idea as for the martingale term. Applying previous inequalities for the terms $\mathbb{E}\left[ \| \delta_{k}\| \| \delta_{k'}\| \right]$ which appear in the double products, we get:
\begin{align*}
\mathbb{E}\left[ \left\| \sum_{k=E(n/2)}^{n-1}\gamma_{k}\beta_{n-1}\beta_{k}^{-1}\delta_{k}\right\|^{2}\right] & \leq C_{m}^{2}\sup_{E(n/2)\leq k \leq n-1}\mathbb{E}\left[ \| Z_{k}-m \|^{4}\right] \left[ \sum_{k=E(n/2)}^{n-1}\gamma_{k}\| \beta_{n-1}\beta_{k}^{-1}\|\right]^{2} \\
& \leq C_{3}\sup_{E(n/2)\leq k \leq n-1}\mathbb{E}\left[ \| Z_{k}-m \|^{4}\right] ,
\end{align*}
since $\left[ \sum_{k=E(n/2)}^{n-1}\gamma_{k}\| \beta_{n-1}\beta_{k}^{-1}\|\right]^{2}$ is bounded. This fact can be checked with similar calculus to the ones in the proof of inequality (\ref{majmart}). 
\end{proof}
To prove Lemma \ref{propor4}, we introduce two technical Lemmas. The first one gives a sharp convexity bound 
when $\| Z_{n}-m \|$ is not too large.
\begin{lem}\label{lemmajznphi}
If assumptions \textbf{(A1)} and \textbf{(A2)} hold, there are a rank $n_{\alpha}$ and a constant $c$ such that for all $n \geq n_{\alpha}$, $\| Z_{n}-m \| \leq cn^{1-\alpha}$ yields
\begin{equation}\label{inegalitelemmepreuve}
\langle \Phi (Z_{n}),Z_{n}-m \rangle \geq \frac{1}{c_{\gamma}n^{1-\alpha}}\| Z_{n}-m \|^{2} .
\end{equation}
As a corollary, there is also a deterministic rank $n_{\alpha}'$ such that for all $n \geq n_{\alpha}'$, $\| Z_{n}-m \| \leq cn^{1-\alpha}$ yields
\begin{equation}
\label{ineglemzngammaphizn}\| Z_{n}-m - \gamma_{n}\Phi(Z_{n}) \|^{2} \leq \left( 1-\frac{1}{n}\right) \| Z_{n}-m \|^{2} .
\end{equation} 
\end{lem}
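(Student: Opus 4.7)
The plan is to express $\Phi(Z_n)$ via the integral form of Taylor's theorem around the zero $m$ of $\Phi$, namely
\[
\Phi(Z_n) = \int_0^1 \Gamma_{m+t(Z_n-m)}(Z_n-m)\,dt,
\]
and then take the inner product with $Z_n - m$:
\[
\langle \Phi(Z_n), Z_n - m\rangle = \int_0^1 \langle \Gamma_{m+t(Z_n-m)}(Z_n-m),\, Z_n - m\rangle\,dt.
\]
On the event $\{\|Z_n - m\| \le c\, n^{1-\alpha}\}$, every intermediate point $m + t(Z_n - m)$ lies in the ball of radius $A_n := \|m\| + c\, n^{1-\alpha}$ around the origin, so each integrand is bounded below by $c_{A_n}\|Z_n - m\|^2$ by Proposition~\ref{propgamma}.

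The main obstacle will be tracking the dependence of $c_A$ on $A$: the target lower bound $1/(c_\gamma n^{1-\alpha})$ must be no larger than $c_{A_n}$, so $c_A$ cannot shrink faster than $1/A$. A quick re-examination of Proposition~\ref{propgamma} confirms that this is indeed the worst case, since the crude lower bound $\|X-h\|^{-1} \ge (\|X\| + A)^{-1}$ is what produces the $1/A$ scaling in the constant; one may therefore take $c_A \ge \tilde c /A$ for some $\tilde c > 0$ independent of $A$. Choosing $c$ sufficiently small (say $c \le \tilde c\, c_\gamma / 2$) and $n$ large enough that $\|m\| \le c\, n^{1-\alpha}$ yields $c_{A_n} \ge 1/(c_\gamma n^{1-\alpha})$, which gives (\ref{inegalitelemmepreuve}).

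For the corollary bound (\ref{ineglemzngammaphizn}), I would expand
\[
\|Z_n - m - \gamma_n \Phi(Z_n)\|^2 = \|Z_n - m\|^2 - 2\gamma_n \langle \Phi(Z_n), Z_n - m\rangle + \gamma_n^2 \|\Phi(Z_n)\|^2.
\]
The cross term is controlled by the first part of the lemma: since $\gamma_n/(c_\gamma n^{1-\alpha}) = 1/n$, one has $-2\gamma_n \langle \Phi(Z_n), Z_n - m\rangle \le -(2/n)\|Z_n - m\|^2$. For the quadratic term, the global upper bound $\|\Phi(Z_n)\| \le C\|Z_n - m\|$ (which follows from the upper part of Proposition~\ref{propgamma} together with $\Phi(m)=0$ and the same Taylor representation) gives $\gamma_n^2 \|\Phi(Z_n)\|^2 \le C^2 c_\gamma^2 n^{-2\alpha} \|Z_n - m\|^2$; since $\alpha > 1/2$, we have $2\alpha > 1$, so this is at most $(1/n)\|Z_n - m\|^2$ for $n$ large enough. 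Summing the three contributions yields $(1 - 2/n + 1/n)\|Z_n - m\|^2 = (1 - 1/n)\|Z_n - m\|^2$, which is the claim. The explicit rank $n_\alpha'$ is then obtained as the maximum of $n_\alpha$ and the threshold beyond which $C^2 c_\gamma^2 n^{-2\alpha} \le 1/n$.
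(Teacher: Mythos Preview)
Your argument for the corollary \eqref{ineglemzngammaphizn} is correct and matches the paper exactly. The problem lies in your proof of \eqref{inegalitelemmepreuve}: the claim that $c_A \ge \tilde c/A$ for the strong-convexity constant of Proposition~\ref{propgamma} is false in general, and this breaks your approach.

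The heuristic you give --- that the $1/A$ scaling comes from the crude bound $\|X-h\|^{-1} \ge (\|X\|+A)^{-1}$ --- overlooks the angular factor $1 - \langle \bar h', X-h\rangle^{2}/\|X-h\|^{2}$. When $\|h\|=A$ is large and $h'$ is taken parallel to $h$, the direction $(X-h)/\|X-h\|$ is close to $-h/\|h\|$, so this angular factor is itself of order $1/A^{2}$, not bounded below by a constant. Concretely, writing $h = Ae$ with $\|e\|=1$, $h'=e$, and decomposing $X = a e + X^{\perp}$ with $a=\langle e,X\rangle$, one gets
\[
\langle e, \Gamma_{Ae}\, e\rangle \;=\; \mathbb{E}\!\left[\frac{\|X^{\perp}\|^{2}}{\|X-Ae\|^{3}}\right],
\]
which is of order $A^{-3}$ for large $A$. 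Hence $c_{A}$ can decay like $1/A^{3}$. With $A_{n} \sim c\,n^{1-\alpha}$ your bound would only give $\langle\Phi(Z_{n}),Z_{n}-m\rangle \gtrsim n^{-3(1-\alpha)}\|Z_{n}-m\|^{2}$, far too weak for \eqref{inegalitelemmepreuve}.

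The paper avoids this entirely by never invoking Proposition~\ref{propgamma} on the growing ball. It splits into the case $\|Z_{n}-m\|\le 1$ (where a fixed constant works directly) and the case $\|Z_{n}-m\|\ge 1$, where it uses the non-negativity of $\Gamma_{h}$ to truncate the Taylor integral to the subinterval $[0,\,1/\|Z_{n}-m\|]$. On that subinterval every intermediate point $m+t(Z_{n}-m)$ stays in the \emph{fixed} ball $\{\|\cdot\|\le \|m\|+1\}$, so a fixed constant $c_{2}=c_{\|m\|+1}$ applies, and the factor $1/\|Z_{n}-m\| \ge 1/(cn^{1-\alpha})$ in the target bound comes from the length of the truncated interval rather than from the decay of $c_{A}$. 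This truncation trick is the missing idea in your argument.
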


\begin{proof}[Proof of Lemma \ref{lemmajznphi}] We suppose that $\| Z_{n} - m \| \leq cn^{1-\alpha}$. We have to consider two cases. 

If $ \| Z_{n}-m \| \leq 1$,  we have  $\| Z_{n} \| \leq \| m \| +1$, so that by Corollary 2.2 in \cite{HC}, there is a positive constant $c_{1}$ such that $\langle \Phi(Z_{n}),Z_{n}-m \rangle \geq c_{1}\| Z_{n}-m \|^{2}$.

If now $\| Z_{n}-m \| \geq 1$, since $\Phi (Z_{n})= \int_{0}^{1} \Gamma_{m+t(Z_{n}-m)}(Z_{n}-m )dt$, by continuity and linearity of the inner product, 
\[
\langle \Phi (Z_{n}) ,Z_{n}-m \rangle = \int_{0}^{1}\left\langle Z_{n}-m, \Gamma_{m+t(Z_{n}-m)}(Z_{n}-m)\right\rangle dt. 
\]
Moreover, operators $\Gamma_{h}$ are non negative for all $h \in H$. Applying Proposition 2.1 of \cite{HC}, and since for all $t\in \left[ 0, \frac{1}{\| Z_{n}-m \|} \right]$ we have $\| m +t(Z_{n}-m) \| \leq \| m \| +1$, there is a positive constant $c_{2}$ such that:
\begin{align*}
\langle \Phi (Z_{n}),Z_{n}-m \rangle & = \int_{0}^{1} \left\langle Z_{n}-m , \Gamma_{m+t(Z_{n}-m )}(Z_{n}-m )\right\rangle dt \\
& \geq \int_{0}^{1/ \| Z_{n}-m \|} \left\langle Z_{n}-m , \Gamma_{m+t(Z_{n}-m )}(Z_{n}-m )\right\rangle dt \\
& \geq \int_{0}^{1/ \| Z_{n}-m \|}c_{2} \| Z_{n}-m \|^{2} dt \\
& = \frac{c_{2}}{\| Z_{n}-m \|}\| Z_{n}-m \|^{2} \\
& \geq \frac{c_{2}}{cn^{1-\alpha}}\| Z_{n}-m \|^{2} .
\end{align*}
We can choose a rank $n_{\alpha}$ such that for all $n\geq n_{\alpha}$ we have $c_{1} \geq \frac{1}{c_{\gamma}n^{1-\alpha}}$ which concludes the proof of inequality (\ref{inegalitelemmepreuve}) with $c=c_{2}c_{\gamma}$.\\
\newline
We now prove inequality (\ref{ineglemzngammaphizn}). For all $n \geq n_{\alpha}$, $\| Z_{n}-m \| \leq cn^{1-\alpha}$ yields
\begin{align*}
 \| Z_{n}-m - \gamma_{n} \Phi(Z_{n}) \|^{2} & = \| Z_{n}-m \|^{2} -2\gamma_{n} \langle \Phi (Z_{n}) , Z_{n}-m \rangle + \gamma_{n}^{2} \| \Phi (Z_{n}) \|^{2} \\
 & \leq \| Z_{n}-m \|^{2} - \frac{2}{c_{\gamma}n^{1-\alpha}}\frac{c_{\gamma}}{n^{\alpha}} \| Z_{n}-m \|^{2} + \gamma_{n}^{2}C^{2}\| Z_{n}-m \|^{2} \\
 & = \left( 1- \frac{2}{n} + C^{2}\frac{c_{\gamma}^{2}}{n^{2 \alpha}} \right) \| Z_{n}-m \|^{2}. 
\end{align*}
Consequently, we can choose a rank $n_{\alpha }' \geq n_{\alpha}$ such that for all $n \geq n_{\alpha}'$ we have $C^{2}c_{\gamma}^{2}n^{-2\alpha} \leq n^{-1}$. Note that this is possible since $\alpha > 1/2$. 
\end{proof}
\begin{lem}\label{lemmajopourrie}
There is a positive constant $C_{\alpha}$ such that for all $n \geq 1$,
\[
 \mathbb{P}\left[ \| Z_{n}-m \| \geq cn^{1-\alpha} \right] \leq \frac{C_{\alpha}}{n^{4-\alpha}}, 
 \]
where constant $c$ has been defined in Lemma~\ref{lemmajznphi}.
\end{lem}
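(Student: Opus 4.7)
The plan is to apply Markov's inequality to a sufficiently high moment of $\|Z_{n}-m\|$, after first establishing a uniform bound on that moment. The key observation is that we cannot appeal to the sharp rates of Theorem~\ref{bonnevitesse}, since Lemma~\ref{lemmajopourrie} is used in the proof of Lemma~\ref{propor4}, which in turn feeds into Theorem~\ref{bonnevitesse}. The required moment bound must therefore be proved by a direct Lyapunov argument, parallel to the uniform $L^{2}$ bound already recalled at the start of Section~\ref{sectionrateofconvergence}.

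First I would show that for every integer $p\geq 1$ there exists $M_{p}<\infty$ with $\mathbb{E}[\|Z_{n}-m\|^{2p}]\leq M_{p}$ for all $n\geq 1$. Writing $Y_{n}:=Z_{n}-m$, the recursion $Y_{n+1}=Y_{n}-\gamma_{n}U_{n+1}$, with $\|U_{n+1}\|\leq 1$ and $\mathbb{E}[U_{n+1}\mid\mathcal{F}_{n}]=\Phi(Z_{n})$, gives
\[
\|Y_{n+1}\|^{2}=\|Y_{n}\|^{2}-2\gamma_{n}\langle Y_{n},U_{n+1}\rangle+\gamma_{n}^{2}.
\]
Raising to the $p$-th power by the binomial formula and taking conditional expectation, the $k=1$ drift term $-2\gamma_{n}\langle Y_{n},\Phi(Z_{n})\rangle$ is non-positive by convexity of $G$, while each cross term for $k\geq 2$ carries a factor $\gamma_{n}^{k}\leq\gamma_{n}^{2}$ and is dominated by $\gamma_{n}^{2}$ times a polynomial in $\|Y_{n}\|$ of degree at most $2p-2$. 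This yields a recursion
\[
\mathbb{E}[\|Y_{n+1}\|^{2p}]\leq \mathbb{E}[\|Y_{n}\|^{2p}] + C_{p}\gamma_{n}^{2}\bigl(1+\mathbb{E}[\|Y_{n}\|^{2p-2}]\bigr),
\]
and combining this with $\sum_{n}\gamma_{n}^{2}<\infty$ and an induction on $p$ (base case $p=1$ being exactly the stated uniform $L^{2}$ bound) produces the constants $M_{p}$. Given $\alpha\in(1/2,1)$, taking $p=\lceil(4-\alpha)/(2-2\alpha)\rceil$ ensures $2p(1-\alpha)\geq 4-\alpha$, and Markov's inequality finally gives
\[
\mathbb{P}\bigl[\|Z_{n}-m\|\geq cn^{1-\alpha}\bigr]\leq\frac{\mathbb{E}[\|Y_{n}\|^{2p}]}{c^{2p}n^{2p(1-\alpha)}}\leq\frac{M_{p}}{c^{2p}\,n^{2p(1-\alpha)}}\leq\frac{C_{\alpha}}{n^{4-\alpha}}.
\]

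The main obstacle I anticipate is the combinatorial bookkeeping in the uniform moment bound: the binomial expansion of $(\|Y_{n}\|^{2}-2\gamma_{n}\langle Y_{n},U_{n+1}\rangle+\gamma_{n}^{2})^{p}$ produces $p+1$ terms, and one must verify, term by term, that after conditioning each is either non-positive or dominated by $\gamma_{n}^{2}$ times a polynomial in $\|Y_{n}\|$ of degree at most $2p-2$, so that the induction on $p$ actually closes. As an alternative, one could apply a scalar Bernstein-Freedman inequality to the martingale $\widetilde{M}_{n}:=\sum_{k=1}^{n-1}\gamma_{k}\langle Y_{k},\xi_{k+1}\rangle$ appearing in the iterated identity for $\|Y_{n}\|^{2}$, combined with the almost sure growth bound $\|Y_{n}\|\leq C_{0}n^{1-\alpha}$ inherited from $\|Z_{n+1}-Z_{n}\|\leq\gamma_{n}$; that route would actually yield an exponentially small tail, but the polynomial Markov-based bound is cleaner and matches the target exponent of $4-\alpha$ exactly.
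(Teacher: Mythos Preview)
Your proposal is correct and follows essentially the same approach as the paper: establish uniform bounds $\mathbb{E}[\|Z_{n}-m\|^{2p}]\leq M_{p}$ for every integer $p$ via a Lyapunov recursion and induction on $p$, then apply Markov's inequality with $p\geq(4-\alpha)/(2-2\alpha)$. The only cosmetic difference is that the paper first isolates the martingale increment, proving $\|Z_{n+1}-m\|^{2}\leq\|Z_{n}-m\|^{2}+6\gamma_{n}^{2}+2\gamma_{n}\langle\xi_{n+1},Z_{n}-m\rangle$ before raising to the $p$-th power (so the $k=1$ term has zero conditional expectation), whereas you expand $\|Y_{n}\|^{2}-2\gamma_{n}\langle Y_{n},U_{n+1}\rangle+\gamma_{n}^{2}$ directly and use non-positivity of the drift; both routes close the same induction.
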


\begin{proof}[Proof of Lemma \ref{lemmajopourrie}]
In order to use Markov's inequality, we prove by induction that for all integer $p \geq 1$, there is a positive constant $M_{p}$ such that for all $n$:
\[
 \mathbb{E}\left[ \| Z_{n} -m \|^{2p} \right] \leq M_{p}.
\]
\cite{HC} have proved previous inequality for the particular case $p=1$. Decomposition (\ref{decphi}) yields
\begin{align*}
\| Z_{n+1}-m \|^{2} & = \| Z_{n}-m \|^{2} +\gamma_{n}^{2}\| \Phi (Z_{n}) \|^{2} + \gamma_{n}^{2} \| \xi_{n+1}\|^{2} \\
&   -2\gamma_{n}\langle Z_{n} -m , \Phi (Z_{n} ) \rangle -2 \gamma_{n}^{2} \langle \xi_{n+1}, \Phi (Z_{n} ) \rangle +2\gamma_{n} \langle \xi_{n+1} , Z_{n}-m \rangle .
\end{align*}
Moreover, $\langle \xi_{n+1},\Phi (Z_{n}) \rangle = - \langle U_{n+1},\Phi(Z_{n}) \rangle + \| \Phi (Z_{n})\|^{2}$. Since $\| \Phi (Z_{n}) \| \leq 1$, $\| \xi_{n+1} \| \leq 2$ and $\langle \Phi (Z_{n}) , Z_{n}-m \rangle \geq 0$, applying Cauchy-Schwarz's inequality, for all $n \geq 1$
\begin{equation}
\label{decrecmoment} \| Z_{n+1}-m \|^{2} \leq \| Z_{n}-m \|^{2} + 6\gamma_{n}^{2} + 2 \gamma_{n}\langle \xi_{n+1}, Z_{n}-m \rangle .
\end{equation}
Using previous  inequality,
\begin{align}
\notag \| Z_{n+1}-m \|^{2p} & \leq \left( \| Z_{n}-m \|^{2}+6\gamma_{n}^{2}+2\gamma_{n} \langle \xi_{n+1},Z_{n}-m-\gamma_{n}\Phi (Z_{n}) \rangle \right)^{p} \\
\notag& = \sum_{k=0}^{p}\binom{p}{k}\left(2\gamma_{n}\langle \xi_{n+1},Z_{n}-m \rangle \right)^{k}\left( \| Z_{n}-m \|^{2}+6\gamma_{n}^{2}\right)^{p-k} \\
\label{decznp} & = \left( \| Z_{n}-m \|^{2}+6\gamma_{n}^{2}\right)^{p} + 2p\gamma_{n}\langle \xi_{n+1},Z_{n}-m \rangle \left( \| Z_{n}-m \|^{2}+6\gamma_{n}^{2}\right)^{p-1} \\
\notag & + \sum_{k=2}^{p}\binom{p}{k}\left(2\gamma_{n}\langle \xi_{n+1},Z_{n}-m \rangle \right)^{k}\left( \| Z_{n}-m \|^{2}+6\gamma_{n}^{2}\right)^{p-k}.
\end{align}
We now bound the three terms in (\ref{decznp}). First, using induction assumptions,
\begin{align*}
\mathbb{E}\left[\left( \| Z_{n}-m \|^{2}+6\gamma_{n}^{2}\right)^{p}\right] & = \mathbb{E}\left[ \| Z_{n}-m \|^{2p} + \sum_{k=0}^{p-1}\binom{p}{k} \| Z_{n}-m \|^{2k} (6\gamma_{n}^{2})^{p-k} \right] \\
& =  \mathbb{E}\left[ \| Z_{n}-m \|^{2p} \right] + \sum_{k=0}^{p-1}\binom{p}{k} \mathbb{E}\left[ \| Z_{n}-m \|^{2k} \right] (6\gamma_{n}^{2})^{p-k} \\
& \leq  \mathbb{E}\left[ \| Z_{n}-m \|^{2p} \right] + \sum_{k=0}^{p-1}\binom{p}{k} M_{k} (6\gamma_{n}^{2})^{p-k}.
\end{align*}
Since for all $k \leq p-1$ we have $\left(\gamma_{n}^{2}\right)^{p-k} = o\left(\gamma_{n}^{2}\right)$, there is a positive constant $C_{p}$ such that for all $n \geq 1$,
\begin{equation}
\mathbb{E}\left[\left( \| Z_{n}-m \|^{2}+6\gamma_{n}^{2}\right)^{p}\right]  \leq \mathbb{E}\left[ \| Z_{n} - m \|^{2p} \right] + C_{p}\gamma_{n}^{2}.
\end{equation}
Remark that $C_{p}$ does not depend on $n$. Let us now deal with the second term in (\ref{decznp}). Since $(\xi_{n})$ is a sequence of martingale differences adapted to the filtration $\left( \mathcal{F}_{n}\right)$ and since $Z_{n}$ is $\mathcal{F}_{n}$-measurable, for all $n \geq 1$, 
\[
 \mathbb{E}\left[ 2\gamma_{n}\langle \xi_{n+1},Z_{n}-m \rangle \left( \| Z_{n}-m \|^{2}+6\gamma_{n}^{2}\right)^{p-1} |\mathcal{F}_{n} \right] = 0.
\]
It remains to bound the last term in (\ref{decznp}). Applying Cauchy Schwarz's inequality,  we get, for all $n \geq 1$,
\begin{align*}
 \sum_{k=2}^{p}\binom{p}{k} & \mathbb{E}\left[ \left(2\gamma_{n}\langle \xi_{n+1},Z_{n}-m \rangle \right)^{k}\left( \| Z_{n}-m \|^{2}+6\gamma_{n}^{2}\right)^{p-k} \right] \\
 & = \sum_{k=2}^{p}\binom{p}{k} \mathbb{E}\left[ \left(2\gamma_{n}\langle \xi_{n+1},Z_{n}-m \rangle \right)^{k}\sum_{j=0}^{p-k}\binom{p-k}{j}\left(\|Z_{n}-m \|^{2}\right)^{p-k-j}\left(6\gamma_{n}^{2}\right)^{j} \right] \\
 & = \sum_{k=2}^{p}\sum_{j=0}^{p-k}\binom{p-k}{j}\binom{p}{k} 2^{k+j}3^{j}\gamma_{n}^{k+2j}\mathbb{E}\left[ \left(\langle \xi_{n+1},Z_{n}-m \rangle \right)^{k}\|Z_{n}-m \|^{2(p-k-j)} \right] \\
 & \leq \sum_{k=2}^{p}\sum_{j=0}^{p-k}\binom{p-k}{j}\binom{p}{k} 2^{k+j}3^{j}\gamma_{n}^{k+2j}\mathbb{E}\left[ \left\| \xi_{n+1} \right\|^{k}\left\|Z_{n}-m \right\|^{2p-k-2j}  \right] .
 \end{align*}
Since $\| \xi_{n+1} \| \leq 2$, we have for all $n\geq 1$,   
 \begin{align*}
\sum_{k=2}^{p}& \sum_{j=0}^{p-k}\binom{p-k}{j}\binom{p}{k} 2^{k+j}3^{j}\gamma_{n}^{k+2j}\mathbb{E}\left[ \left\| \xi_{n+1} \right\|^{k}\left\|Z_{n}-m \right\|^{2p-k-2j}  \right] \\
 & \leq \sum_{k=2}^{p}\sum_{j=0}^{p-k}\binom{p-k}{j}\binom{p}{k} 2^{2k+j}3^{j}\gamma_{n}^{k+2j}\mathbb{E}\left[ \left\|Z_{n}-m \right\|^{2p-k-2j}  \right] .
 \end{align*}
Finally, using Cauchy-Schwarz's inequality and by induction, we get
 \begin{align*}
\sum_{k=2}^{p} & \sum_{k=2}^{p}\sum_{j=0}^{p-k}\binom{p-k}{j}\binom{p}{k} 2^{2k+j}3^{j}\gamma_{n}^{k+2j}\mathbb{E}\left[ \left\|Z_{n}-m \right\|^{2p-k-2j}  \right] \\
& \leq \sum_{k=2}^{p}\sum_{j=0}^{p-k}\binom{p-k}{j}\binom{p}{k}2^{2k+j}3^{j}\gamma_{n}^{k+2j}\sqrt{\mathbb{E}\left[  \|Z_{n}-m \|^{2(p-1-j)}\right] }\sqrt{ \mathbb{E}\left[ \| Z_{n}-m \|^{2(p-k-j+1)} \right]} \\
 & \leq \sum_{k=2}^{p}\sum_{j=0}^{p-k}\binom{p-k}{j}\binom{p}{k}2^{2k+j}3^{j}\gamma_{n}^{k+2j} \sqrt{M_{p-1-j}}\sqrt{ M_{p-k-j+1} }.
\end{align*}
Moreover, for all $k \geq 2$ and $j \geq 0$, $\gamma_{n}^{2j+k}=O (\gamma_{n}^{2})$, so there is a constant $C_{p}'$ such that for all $n \geq 1$:
\begin{equation}
\notag \sum_{k=2}^{p}\binom{p}{k}  \mathbb{E}\left[ \left(2\gamma_{n}\langle \xi_{n+1},Z_{n}-m \rangle \right)^{k}\left( \| Z_{n}-m \|^{2}+6\gamma_{n}^{2}\right)^{p-k} \right] \leq C_{p}' \gamma_{n}^{2}
\end{equation}
Remark that $C_{p}'$ does not depend on $n$. Since $Z_ {1}$ is bounded by construction, we get by induction 
\begin{align*}
\mathbb{E}\left[ \| Z_{n+1}-m \|^{2p}\right] & \leq \mathbb{E}\left[ \| Z_{n}-m \|^{2p}\right] + \left(C_{p}+C_{p}' \right)\gamma_{n}^{2} \\
& \leq \mathbb{E}\left[ \| Z_{1}-m \|^{2p} \right] + \left( C_{p}+C_{p}'\right) \sum_{k=1}^{n}\gamma_{k}^{2} \\
&  \leq \mathbb{E}\left[ \| Z_{1}-m \|^{2p} \right] + \left( C_{p}+C_{p}'\right) \sum_{k=1}^{\infty}\gamma_{k}^{2} \\
& \leq M_{p},
\end{align*}
which concludes the induction.

Applying Markov's inequality, for all integer $p \geq 1$:
\[
\mathbb{P}\left[ \| Z_{n}-m \| \geq cn^{1-\alpha} \right] \leq \frac{\mathbb{E}\left[ \| Z_{n}-m \|^{2p}\right]}{(cn^{1-\alpha})^{2p}} \leq \frac{M_{p}}{(cn^{1-\alpha})^{2p}}.
\]
The announced result is obtained  by considering $p \geq \frac{4-\alpha}{2(1-\alpha)}$. Note that this is possible since $\alpha \neq 1$.
\end{proof}

\begin{proof}[Proof of Lemma \ref{propor4}.] For all $n \geq 1$,
\begin{equation}
\label{eqznm4=} \mathbb{E}\left[ \| Z_{n+1}-m \|^{4} \right] = \mathbb{E}\left[ \| Z_{n+1}-m \|^{4} \mathbb{1}_{\| Z_{n}-m \| \geq cn^{1-\alpha}}\right] +  \mathbb{E}\left[ \| Z_{n+1}-m \|^{4} \mathbb{1}_{\| Z_{n}-m \| < cn^{1-\alpha}}\right] ,
\end{equation}
where constant $c$ has been defined in Lemma \ref{lemmajznphi}. Let us bound the first term at the right-hand side of~(\ref{eqznm4=}). Since $\| Z_{n+1}-m \| \leq \| Z_{n}-m \| + \gamma_{n} \leq \| Z_{1}-m \| + \sum_{k=1}^{n}\gamma_{k}$ and since $Z_{1}$ is bounded, there is a constant $C_{\alpha}'$ such that for all integer $n \geq 1$,
\[
 \| Z_{n}-m \| \leq C_{\alpha}'n^{1-\alpha}.
\] 
Consequently, 
\begin{align*}
\mathbb{E} \left[ \| Z_{n+1}-m \|^{4}\mathbb{1}_{\| Z_{n}-m \| \geq cn^{1-\alpha}} \right] & \leq \mathbb{E}\left[ \left( C_{\alpha}'(n+1)^{1-\alpha}\right)^{4} \mathbb{1}_{\| Z_{n}-m \| \geq cn^{1-\alpha}}\right] \\
& \leq \left( C_{\alpha}'(n+1)^{1-\alpha}\right)^{4}\mathbb{P}\left[ \| Z_{n}-m \| \geq cn^{1-\alpha}\right] . 
\end{align*}
Thus, applying Lemma \ref{lemmajopourrie}, we get
\begin{align*}
\left( C_{\alpha}'(n+1)^{1-\alpha}\right)^{4}\mathbb{E}\left[ \| Z_{n}-m \| \geq cn^{1-\alpha}\right] &  \leq \frac{C_{\alpha}'^{4}C_{\alpha}(n+1)^{4-4\alpha}}{n^{4-\alpha}}\\
& \leq 2^{4-4\alpha}\frac{C_{\alpha}'^{4}C_{\alpha}n^{4-4\alpha}}{n^{4-\alpha}} \\
& \leq 2^{4-4\alpha}\frac{C_{\alpha}'^{4}C_{\alpha}}{n^{3\alpha}}.
\end{align*}
We now bound the second term. Suppose that $\| Z_{n}-m \| \leq cn^{1-\alpha}$. Since $\| \xi_{n+1}\| \leq 2$, using Lemma \ref{lemmajznphi}, there is a rank $n_{\alpha}$ such that for all $n \geq n_{\alpha}$,
\begin{align*}
\| Z_{n+1}-m \|^{2}& \mathbb{1}_{ \| Z_{n}-m \| < cn^{1-\alpha}}\\
 & = \left( \| Z_{n}-m-\gamma_{n}\Phi (Z_{n}) \|^{2} + \gamma_{n}^{2} \| \xi_{n+1} \|^{2} +2\gamma_{n} \langle Z_{n}-m-\gamma_{n}\Phi (Z_{n}), \xi_{n+1}\rangle \right)\mathbb{1}_{\left\lbrace \| Z_{n}-m \| < cn^{1-\alpha}\right\rbrace}\\
& \leq \left( \left( 1-\frac{1}{n} \right) \| Z_{n}-m \|^{2} + 4\gamma_{n}^{2} + 2\gamma_{n} \langle Z_{n}-m-\gamma_{n}\Phi (Z_{n}), \xi_{n+1}\rangle \right) \mathbb{1}_{\left\lbrace \| Z_{n}-m \| < cn^{1-\alpha}\right\rbrace}.
\end{align*}
Moreover, since $(\xi_{n+1})$ is a sequence of martingale differences for the filtration $\left( \mathcal{F}_{n}\right)$, 
\begin{align*}
\mathbb{E}\left[ \langle Z_{n}-m-\gamma_{n}\Phi (Z_{n}), \xi_{n+1}\rangle \mathbb{1}_{\| Z_{n}-m \| \leq cn^{1-\alpha}} |\mathcal{F}_{n} \right] & =0,\\
\mathbb{E}\left[ \langle Z_{n}-m-\gamma_{n}\Phi (Z_{n}), \xi_{n+1}\rangle \| Z_{n}-m \|^{2}\mathbb{1}_{\| Z_{n}-m \| \leq cn^{1-\alpha}} |\mathcal{F}_{n} \right] & =0. 
\end{align*}
Applying Cauchy-Schwarz's inequality, 
\begin{align*}
\mathbb{E}\left[ \| Z_{n+1}-m \|^{4}\mathbb{1}_{\| Z_{n}-m \| \leq cn^{1-\alpha}}\right] & \leq \left( 1-\frac{1}{n}\right)^{2} \mathbb{E}\left[ \| Z_{n}-m \|^{4}\mathbb{1}_{\| Z_{n}-m \| \leq cn^{1-\alpha}}\right] +16 \gamma_{n}^{4}\\
&  + 8\gamma_{n}^{2}\left( 1-\frac{1}{n}\right) \mathbb{E}\left[ \| Z_{n}-m \|^{2}\mathbb{1}_{\| Z_{n}-m \| \leq cn^{1-\alpha}}\right] \\
& + 4\gamma_{n}^{2}\mathbb{E}\left[ \langle Z_{n}-m-\gamma_{n}\Phi (Z_{n}), \xi_{n+1}\rangle^{2} \mathbb{1}_{\| Z_{n}-m \| \leq cn^{1-\alpha}} \right] \\
& \leq \left( 1-\frac{1}{n}\right)^{2} \mathbb{E}\left[ \| Z_{n}-m \|^{4} \right] + 16\gamma_{n}^{4} + 8\gamma_{n}^{2}\mathbb{E}\left[ \| Z_{n}-m  \|^{2}\right] \\
& + 4\gamma_{n}^{2}\mathbb{E}\left[ \| Z_{n}-m -\gamma_{n}\Phi(Z_{n})\|^{2} \mathbb{E}\left[ \| \xi_{n+1} \|^{2}|\mathcal{F}_{n} \right]\mathbb{1}_{\| Z_{n}-m \| \leq cn^{1-\alpha}}  \right] .
\end{align*}
Finally, since $\mathbb{E} \left[ \| \xi_{n+1}\|^{2} | \mathcal{F}_{n} \right] \leq 1$, we get with Lemma \ref{lemmajopourrie},
\begin{align*}
\mathbb{E}\left[ \| Z_{n+1}-m \|^{4}\mathbb{1}_{\| Z_{n}-m \| \leq cn^{1-\alpha}}\right]& \leq \left( 1-\frac{1}{n}\right)^{2} \mathbb{E}\left[ \| Z_{n}-m \|^{4} \right] + 16\gamma_{n}^{4} + 8\gamma_{n}^{2}\mathbb{E}\left[ \| Z_{n}-m  \|^{2}\right] \\
& + 4 \gamma_{n}^{2}\left( 1-\frac{1}{n}\right) \mathbb{E}\left[ \| Z_{n}-m \|^{2}\right] \\
& \leq \left( 1-\frac{1}{n}\right)^{2} \mathbb{E}\left[ \| Z_{n}-m \|^{4} \right] + 16\gamma_{n}^{4} + 12\gamma_{n}^{2}\mathbb{E}\left[ \| Z_{n}-m  \|^{2}\right] .
\end{align*}
Since $\gamma_{n}^{4}=o \left( \frac{1}{n^{3\alpha}} \right)$, there are two positive constants $C_{1}'$ and $C_{2}'$ such that for all $n \geq n_{\alpha}$,
\begin{align*}
\mathbb{E}\left[ \| Z_{n+1}-m \|^{4} \right] & = \mathbb{E}\left[ \| Z_{n+1}-m \|^{4} \mathbb{1}_{\| Z_{n}-m \| \geq cn^{1-\alpha}}\right] +  \mathbb{E}\left[ \| Z_{n+1}-m \|^{4} \mathbb{1}_{\| Z_{n}-m \| \leq cn^{1-\alpha}}\right] \\
& \leq \frac{2^{4-4\alpha}C_{\alpha}'^{4}C_{\alpha}}{n^{3\alpha}} + \left( 1-\frac{1}{n}\right)^{2} \mathbb{E}\left[ \| Z_{n}-m \|^{4}\right] + 16\gamma_{n}^{4} + 12\gamma_{n}^{2}\mathbb{E}\left[ \| Z_{n}-m  \|^{2}\right] \\
& \leq \left( 1-\frac{1}{n} \right)^{2}\mathbb{E}\left[ \| Z_{n}-m \|^{4}\right] + C_{1}'\frac{1}{n^{3\alpha}}+C_{2}'\frac{1}{n^{2\alpha}}\mathbb{E}\left[ \| Z_{n}-m \|^{2}\right] .
\end{align*}
\end{proof}
\begin{proof}[Proof of Theorem \ref{bonnevitesse}]
Let $\beta \in ( \alpha , 3\alpha -1)$. Let us check that there is a rank $n_{\beta}\geq n_{\alpha}$ ($n_{\alpha}$ has been defined in Lemma \ref{propor4})  such that for all $n\geq n_{\beta}$, we have 
\[\left( 1- \frac{1}{n}\right)^{2}\left( \frac{n+1}{n}\right)^{\beta} + \left( C_{1}' + C_{2}' \right)2^{3\alpha}\frac{1}{(n+1)^{3\alpha - \beta}} \leq 1,\] 
($C_{1}',C_{2}'$ are defined in Lemma \ref{propor4}). Indeed, since $\beta < 3\alpha -1 < 2$, 
\begin{align*}
\left( 1- \frac{1}{n}\right)^{2}\left( \frac{n+1}{n}\right)^{\beta} + \left( C_{1}' + C_{2}' \right)2^{3\alpha}\frac{1}{(n+1)^{3\alpha - \beta}} & = \left( 1-\frac{2}{n}+o\left( \frac{1}{n}\right) \right)\left( 1+\frac{\beta}{n}+o\left( \frac{1}{n}\right) \right) + o\left( \frac{1}{n}\right) \\
& = 1- (2-\beta)\frac{1}{n}+o\left( \frac{1}{n}\right) .
\end{align*}
We now prove by induction that there are two positive constants $C'$ and $C''$ such that $2C' \geq C''\geq C' \geq 1$ and such that for all $n \geq n_{\beta}$,
\begin{align*}
\mathbb{E}\left[ \| Z_{n}-m \|^{2}\right]  & \leq \frac{C'}{n^{\alpha}} \\
\mathbb{E}\left[ \| Z_{n}-m \|^{4}\right] & \leq \frac{C''}{n^{\beta}} .
\end{align*}
Let us choose $C'\geq n_{\beta}\mathbb{E}\left[ \| Z_{n_{\beta}}-m \|^{2}\right] $ and $C''\geq n_{\beta}\mathbb{E}\left[ \| Z_{n_{\beta}}-m \|^{4}\right]$. This is possible since there is a positive constant $M$ such that for all $n\geq 1$, $\mathbb{E}\left[ \| Z_{n}-m \|^{2} \right] \leq M$ and $\mathbb{E}\left[ \left\| Z_{n}-m \right\|^{4}\right] \leq M$. Let $n \geq n_{\beta}$, using Lemma \ref{propor4} and by induction,
\begin{align*}
\mathbb{E}\left[ \| Z_{n+1}-m \|^{4} \right] & \leq \left( 1-\frac{1}{n}\right)^{2}\mathbb{E}\left[ \| Z_{n}-m \|^{4} \| \right] + \frac{C_{1}'}{n^{3\alpha}}+\frac{C_{2}'}{n^{2\alpha}}\mathbb{E}\left[ \| Z_{n}-m \|^{2}\right] \\
& \leq \left( 1-\frac{1}{n}\right)^{2}\frac{C''}{n^{\beta}}+\frac{C_{1}'}{n^{3\alpha}}+\frac{C_{2}'C'}{n^{3\alpha}}.
\end{align*}
Moreover, since $C' \leq C''$ and since $C'' \geq 1$,
\begin{align*}
\mathbb{E}\left[ \| Z_{n+1}-m \|^{4} \right] & \leq \left( 1-\frac{1}{n}\right)^{2}\frac{C''}{n^{\beta}}+\frac{C_{1}'C''}{n^{3\alpha}}+\frac{C_{2}'C''}{n^{3\alpha}}. 
\end{align*}
Factorizing by $\frac{C''}{(n+1)^{\beta}}$, we get
\begin{align*}
\mathbb{E}\left[ \| Z_{n+1}-m \|^{4} \right] & \leq \left( 1- \frac{1}{n}\right)^{2}\left( \frac{n+1}{n}\right)^{\beta}\frac{C''}{(n+1)^{\beta}} + \left( C_{1}' + C_{2}' \right)\left( \frac{n+1}{n}\right)^{3\alpha}\frac{1}{(n+1)^{3\alpha - \beta}}\frac{C''}{(n+1)^{\beta}} \\
& \leq \left( 1- \frac{1}{n}\right)^{2}\left( \frac{n+1}{n}\right)^{\beta}\frac{C''}{(n+1)^{\beta}} + \left( C_{1}' + C_{2}' \right)2^{3\alpha}\frac{1}{(n+1)^{3\alpha - \beta}}\frac{C''}{(n+1)^{\beta}} \\
& \leq \left( \left( 1- \frac{1}{n}\right)^{2}\left( \frac{n+1}{n}\right)^{\beta} + \left( C_{1}' + C_{2}' \right)2^{3\alpha}\frac{1}{(n+1)^{3\alpha - \beta}} \right) \frac{C''}{(n+1)^{\beta}}.
\end{align*}
By definition of $n_{\beta}$, 
\begin{equation}
\mathbb{E}\left[ \| Z_{n+1}-m \|^{4} \right] \leq \frac{C''}{(n+1)^{\beta}}.
\end{equation}
We now prove that $\mathbb{E}\left[ \| Z_{n+1}-m \|^{2}\right] \leq \frac{C'}{(n+1)^{\alpha}}$. Since $C'' \leq 2C'$, by Lemma \ref{majexp} and by induction, there is a constant $C'''>0$ such that
\begin{align*}
\mathbb{E}\left[ \| Z_{n+1}-m \|^{2}\right] & \leq \frac{C'''}{(n+1)^{\alpha}}+C_{3}\sup_{n/2+1 \leq k \leq n+1}\mathbb{E}\left[ \| Z_{k}-m \|^{4}\right] \\
& \leq \frac{C'''}{(n+1)^{\alpha}}+2^{\beta}C_{3}\frac{C''}{(n+1)^{\beta}} \\
& \leq \frac{C'''}{(n+1)^{\alpha}}+ 2^{\beta +1}C_{3}\frac{1}{(n+1)^{\beta -\alpha}}\frac{C'}{(n+1)^{\alpha}}. 
\end{align*}
To get $\mathbb{E}\left[ \| Z_{n+1}-m \|^{2} \right] \leq \frac{C'}{(n+1)^{\alpha}} $, we only need to take $C' \geq C'''+ 2^{\beta +1}C_{3}\frac{1}{(n+1)^{\beta - \alpha}}$, which concludes the induction. 

The proof is complete for all $n \geq 1$ by taking $C'\geq \max_{n\leq n_{\beta}} \left\lbrace n^{\alpha}\mathbb{E}\left[ \| Z_{n}-m \|^{2}\right] \right\rbrace $ and $C''\geq \max_{n\leq n_{\beta}} \left\lbrace n^{\beta}\mathbb{E}\left[ \| Z_{n}-m \|^{4}\right] \right\rbrace $. 
\end{proof}
\begin{proof}[Proof of Proposition \ref{propbonnevitesse}] A lower bound for $\| Z_{n}-m-\Phi (Z_{n}) \|$ is obtained by using decomposition (\ref{decphi}). Using Corollary \ref{corgamma}, for all $h \in H$, 
\begin{align*}
\| \Phi (m+h) \| &  \leq \left\| \int_{0}^{1}\Gamma_{m+th}(h) dt \right\| \\
& \leq \int_{0}^{1}\left\| \Gamma_{m+th}(h) \right\| dt \\
& \leq C \| h \| .
\end{align*}
Consequently, there is a rank $n_{0}$ such that for all $n \geq n_{0}$,
\begin{align*}
\| h - \gamma_{n} \Phi (m+h) \| & \geq \left| \| h \| - \gamma_{n} \| \Phi (m+h) \| \right| \\
& \geq \| h \| - C \gamma_{n} \| h \| .
\end{align*}
In particular, for all $n \geq n_{0}$,
\[
\| Z_{n}-m-\gamma_{n}\Phi (Z_{n}) \| \geq \left( 1-C\gamma_{n} \right) \| Z_{n}-m \| . 
\]
Since $\lim_{n\rightarrow \infty}\mathbb{E}\left[ \| Z_{n}-m \|^{2}\right] = 0$, there is a rank $n_{0}'$ such that for all $n \geq n_{0}'$,
\[
\mathbb{E}\left[ \| \xi_{n+1}\|^{2} \right] = 1 -\mathbb{E}\left[ \| \Phi (Z_{n}) \|^{2}\right] \geq 1-C^{2}\mathbb{E}\left[ \| Z_{n}-m \|^{2} \right] . 
\]
Finally, since $\left( \xi_{n+1}\right)$ is a sequence of martingale differences adapted to the filtration $\left( \mathcal{F}_{n} \right)$, there is a rank $n_{1} \geq n_{0}'$ such that for all $n \geq n_{1}$,
\begin{align*}
\mathbb{E}\left[ \| Z_{n+1}-m \|^{2}\right] & \geq \left( 1-C\gamma_{n}\right)^{2} \mathbb{E}\left[ \| Z_{n}-m \|^{2}\right] + \gamma_{n}^{2}\left( 1-2C^{2}\mathbb{E}\left[ \| Z_{n}-m \|^{2}\right] \right) \\
& \geq \left( 1-2C\gamma_{n} \right) \mathbb{E}\left[ \| Z_{n}-m \|^{2} \right] + \gamma_{n}^{2}.
\end{align*}
We can prove by induction that there is a positive constant $C_{0}$ such that for all $n \geq n_{1}$,
\begin{align*}
 \mathbb{E}\left[ \| Z_{n}-m \|^{2} \right] &\geq \frac{C_{0}}{n^{\alpha}}.
\end{align*}
To conclude the proof, we just have to consider $C':= \min \left\lbrace \min_{1\leq n \leq n_{1}} \left\lbrace \mathbb{E}\left[ \| Z_{n}-m \|^{2}\right] n^{\alpha} \right\rbrace , C_{0} \right\rbrace $.
\end{proof}

\subsection{Proofs of the results given in Section \ref{sectionpinelis}}
\begin{proof}[Proof of Proposition \ref{pinelisadapte}] As in \cite{Pinelis}, let us define, for all integers $j$ and $n$ such that $2\leq j \leq n$, 
\begin{align*}
f_{j,n} & := \sum_{k=1}^{j-1}\gamma_{k}\beta_{n-1,k}\xi_{k+1} , \\
d_{j,n} & := f_{j,n} - f_{j-1,n} = \beta_{n-1,j-1}\gamma_{j-1}\xi_{j}, \\
e_{j,n} & := \mathbb{E}\left[ e^{\| d_{j,n} \| }-1-\| d_{j,n}\| |\mathcal{F}_{j-1}\right] ,
\end{align*} 
with $f_{0,n}=0$. Remark that for all $k \leq n-1$,
\[
\mathbb{E}\left[ \beta_{n-1,k}\xi_{k+1}|\mathcal{F}_{k}\right] = 0.
\]
It is not possible to apply directly Theorem 3.1 of \cite{Pinelis} because the sequence $\left( \beta_{n-1,k}\xi_{k+1} \right)$ is not properly a martingale differences sequence. As in \cite{Pinelis}, for all $t \in [0,1]$, let us define $u(t):=\| x+tv \|$, with $x,v \in H$. We have for all $t \in [0,1]$, $u'(t) \leq \| v \| $ and $\left( u^{2}(t)\right) '' \leq 2\| v \|^{2}$. Moreover, since for all $u \in \mathbb{R}$, $\cosh u \geq \sinh u$, we also get 
\begin{align*}
\left( \cosh u \right)''(t) &\leq \| v \|^{2} \cosh u. 
\end{align*}
Let $\varphi (t) := \mathbb{E} \left[ \cosh \left( \| f_{j-1,n}+td_{j,n} \| \right) |\mathbb{F}_{j}\right] $, 
\begin{align*}
\varphi ''(t) & \leq \mathbb{E}\left[ \| d_{j,n}\|^{2}\cosh \left( \| f_{j-1,n}+td_{j,n}\| \right) | \mathcal{F}_{j-1}\right] \\
& \leq \mathbb{E}\left[ \| d_{j,n}\|^{2}e^{t\| d_{j,n)} \|}\cosh \left( \| f_{j-1,n}\| \right) |\mathcal{F}_{j-1}\right] .
\end{align*}
Moreover, since $(\xi_{n})$ is a sequence of martingale differences adapted to the filtration $\left( \mathcal{F}_{n} \right)$, for all $j \geq 1$, $\mathbb{E}\left[ d_{j,n}|\mathcal{F}_{j-1}\right]=0$ and $\varphi '(0) = 0$. We get for all $j \geq 1$ such that $j \leq n$,
\begin{align*}
\mathbb{E}\left[ \cosh \left( \| f_{j,n}\| \right) |\mathcal{F}_{j-1} \right] & = \varphi (1) \\
& = \varphi (0) + \int_{0}^{1}(1-t)\varphi ''(t) dt \\
& \leq (1+e_{j,n})\cosh \left( \| f_{j-1,n} \| \right) .
\end{align*}
Let $G_{1}:=1$ and for all $2 \leq j \leq n$, let $G_{j}:=\frac{\cosh (\| f_{j,n} \| )}{\prod_{i=2}^{j}(1+e_{i,n})}$. Using previous inequality, since $e_{j+1,n}$ is $\mathcal{F}_{j}$-measurable, 
\begin{align*}
\mathbb{E}\left[ G_{j+1}|\mathcal{F}_{j} \right] & = \mathbb{E}\left[ \frac{\cosh \left( \left\| f_{j+1,n}\right\| \right) }{\prod_{i=2}^{j+1}\left( 1+e_{i,n}\right)}\big| \mathcal{F}_{j}\right] \\
& = \frac{\mathbb{E}\left[ \cosh \left( \left\| f_{j+1,n}\right\| \right) \big| \mathcal{F}_{j} \right] }{\prod_{i=2}^{j+1}\left( 1+e_{i,n}\right)} \\
& \leq \frac{\left(1+e_{j+1,n}\right)\cosh \left( \left\| f_{j,n}\right\| \right) }{\prod_{i=2}^{j+1}\left( 1+e_{i,n}\right)} \\
& = G_{j}.
\end{align*}
By induction, $\mathbb{E}\left[ G_{n} \right] \leq \mathbb{E} \left[ G_{1} \right] \leq 1$. Finally,
\begin{align*}
\mathbb{P}\left[ \| f_{n,n} \| \geq r \right] & \leq \mathbb{P}\left[ G_{n} \geq \frac{\cosh r}{\left\| \prod_{j=2}^{n}(1+e_{j,n}) \right\|}\right] \\
& \leq \mathbb{P}\left[ G_{n} \geq \frac{1}{2}\frac{\exp(r)}{\left\| \prod_{j=2}^{n}(1+e_{j,n}) \right\|}\right] \\
& \leq 2 \mathbb{E}\left[ G_{n} \right] e^{-r}\left\| \prod_{j=2}^{n}(1+e_{j,n}) \right\| \\
& \leq 2e^{-r}\left\| \prod_{j=2}^{n}(1+e_{j,n}) \right\| .
\end{align*}
\end{proof}
\begin{proof}[Proof of Theorem \ref{interv}] Using Theorem \ref{bonnevitesse}, one can check that $\mathbb{E}\left[ \| \beta_{n-1}R_{n} \| \right] = O\left( \frac{1}{n^{\alpha}}\right)$. Indeed, applying Lemma \ref{lemdelta}, 
\begin{align*}
\mathbb{E}\left[ \| \beta_{n-1}R_{n}\| \right] & \leq \sum_{k=1}^{n-1}\gamma_{k}\| \beta_{n-1}\beta_{k}^{-1}\| \mathbb{E}\left[ \| \delta_{k} \| \right] \\
& \leq C_{m} \sum_{k=1}^{n-1}\gamma_{k}\| \beta_{n-1}\beta_{k}^{-1}\| \mathbb{E}\left[ \| Z_{k}-m \|^{2}\right] .
\end{align*}
Moreover, with calculus similar to the ones for the upper bound of the martingale term in the proof of Proposition \ref{majexp} and applying Proposition \ref{propbonnevitesse}, 
\begin{align*}
C_{m} \sum_{k=1}^{n-1}\gamma_{k}\| \beta_{n-1}\beta_{k}^{-1}\| \mathbb{E}\left[ \| Z_{k}-m \|^{2}\right] & \leq C_{m}C'c_{\gamma}\sum_{k=1}^{n-1}\frac{1}{k^{2\alpha}}\| \beta_{n-1}\beta_{k}^{-1}\| \\
& = O \left( \frac{1}{n^{\alpha}}\right) . 
\end{align*} 
Finally, the term $ \beta_{n-1}(Z_{1}-m)$ converges exponentially to $0$. So, there are positive constants $C_{1},C_{1}',C_{2}$ such that 
\begin{equation}
\label{majinterv} \mathbb{P}\left[ \| Z_{n}-m \| \geq t \right] \leq \mathbb{P}\left[ \| \beta_{n-1}M_{n} \| \geq \frac{t}{2}\right] + \frac{C_{1}e^{-C_{1}'n^{1-\alpha}}}{t^{2}}+\frac{C_{2}}{n^{\alpha}}\frac{1}{t}
\end{equation}
We now give a "good" choice of sequences $( N_{n})$ and $(\sigma_{n}^{2})$ to apply Corollary \ref{corpinbern}.

\noindent \textit{Step 1: Choice of $N_{n}$.} \\
Using inequality (\ref{majbeta2}) and since $\| \xi_{n+1} \| \leq 2$, we have $\| \beta_{n-1}\beta_{k}^{-1}\xi_{k+1} \| \leq 2c_{2}\gamma_{k}e^{-\lambda_{\min}\sum_{j=k+1}^{n-1}\gamma_{j}}$ if $k \neq n-1$, where $\lambda_{\min}$ is the smallest eigenvalue of $\Gamma_{m}$. With calculus analogous to the ones for the bound of the martingale term in the proof of Proposition \ref{majexp}, one can check that if $k \leq n/2$,
\begin{align*}
 \nrm{ \beta_{n-1}\beta_{k}^{-1} \gamma_{k}\xi_{k+1}} & \leq 2c_{2}e^{-2\lambda_{\min}c_{\gamma}n^{1-\alpha}}\gamma_{1}. 
\end{align*}
Moreover, if $k \geq n/2$ and $k \neq n-1$,
\begin{align*}
2c_{2}\gamma_{k}e^{-\lambda_{\min}\sum_{j=k+1}^{n-1}\gamma_{j}} & \leq 2c_{2}\gamma_{k} \leq 2c_{2}2^{\alpha}c_{\gamma}\frac{1}{n^{\alpha}}.
\end{align*}
Finally, if $k=n-1$,
\begin{align*}
\nrm{\beta_{n-1}\beta_{k}^{-1} \gamma_{n-1}\xi_{n}} & \leq c_{\gamma}2^{\alpha}\frac{1}{n^{\alpha}}.
\end{align*}
Let $C_{N}:= \max \left\lbrace \sup_{n\geq 1}\left\lbrace e^{-2\lambda_{\min}c_{\gamma}n^{1-\alpha}}n^{\alpha}\right\rbrace , 2c_{2} , 1 \right\rbrace$, thus for all $n \geq 1$, \\
$\sup_{k \leq n-1}  \left\lbrace \| \beta_{n-1}\beta_{k}^{1}\gamma_{k}\xi_{k+1} \| \right\rbrace \leq \frac{C_{N}}{n^{\alpha}}$. So we take
\begin{align*}
N_{n} &=\frac{C_{N}}{n^{\alpha}} .
\end{align*}

\noindent\textit{Step 2: Choice of $\sigma_{n}^{2}$.} \\
In the same way, for $n$  large enough, we have 
\[
\sum_{k=1}^{n-1}\mathbb{E}\left[ \left\| \beta_{n-1}\beta_{k}^{-1}\gamma_{k}\xi_{k+1}\right\|^{2} |\mathcal{F}_{k} \right] \leq \frac{2^{\alpha +1}c_{\gamma}}{c_{m}}\frac{1}{n^{\alpha}}.
\]
 Indeed, we can split the sum into two parts, the first one converges exponentially fast to $0$, and is smaller than the second one from a certain rank. For $n$ large enough, we can take
\begin{equation}\label{choixsigma}
\sigma_{n}^{2}= c_{\gamma}\frac{2^{1+\alpha}}{c_{m}}\frac{1}{n^{\alpha}}.
\end{equation}
Using inequality (\ref{majinterv}) and Corollary \ref{corpinbern},
\begin{align*}
\mathbb{P}\left[ \| Z_{n}-m \| \geq t \right] &\leq 2\exp \left( -\frac{(t/2)^{2}}{2(\sigma_{n}^{2} + N_{n}(t/2)/3)}\right) + \frac{C_{1}e^{-C_{1}'n^{1-\alpha}}}{t^{2}}+\frac{C_{2}}{n^{\alpha}}\frac{1}{t}=:f(t,n) .
\end{align*}
We look for values of $t$ for which $f(t,n) \leq \delta$. We search to solve:
\begin{align*}
2\exp \left( -\frac{(t/2)^{2}}{2(\sigma_{n}^{2} + N_{n}t/6)}\right) & \leq \delta /2 , \\
\frac{C_{1}e^{-C_{1}'n^{1-\alpha}}}{t^{2}} & \leq \delta /4 , \\
\frac{C_{2}}{n^{\alpha}}\frac{1}{t} & \leq \delta /4 .
\end{align*}
We get (see \cite{Tarres} , Appendix A, for the exponential term):
\begin{align*}
t & \geq 4 \left( \frac{N_{n}}{3}+\sigma_{n} \right) \ln \frac{4}{\delta} , \\
t & \geq 2\sqrt{ \frac{C_{1}e^{-C_{1}'n^{1-\alpha}}}{\delta}} , \\
t & \geq 4 \frac{C_{2}}{n^{\alpha}}\frac{1}{\delta}.
\end{align*}
Let us take a rank $n_\delta$ such that for all $n \geq n_{\delta}$, with (\ref{choixsigma}),
\begin{align*}
4 \left( \frac{N_{n}}{3}+\sigma_{n} \right) \ln \frac{4}{\delta} & \geq  2\sqrt{ \frac{C_{1}e^{-C_{1}'n^{1-\alpha}}}{\delta}},\\
4 \left( \frac{N_{n}}{3}+\sigma_{n} \right) \ln \frac{4}{\delta} & \geq 4 \frac{C_{2}}{n^{\alpha}}\frac{1}{\delta}.
\end{align*}
Thus, for all $n \geq n_{\delta}$, with probability at least $1-\delta$:
\begin{align*}
\| Z_{n}-m \| &\leq 4 \left( \frac{N_{n}}{3}+\sigma_{n} \right) \ln \frac{4}{\delta}.
\end{align*}
\end{proof}

\subsection{Proof of Theorem \ref{theointervmoyennise}}

Since $\mathbb{E}\left[ \| Z_{n}-m \|^{2} \right] \leq \frac{C'}{n^{\alpha}}$, applying Cauchy-Schwarz's inequality, we have $\mathbb{E}\left[ \| Z_{n}-m \| \right] \leq \sqrt{\frac{C'}{n^{\alpha}}}$. These bounds are useful to prove that the first terms in equation (\ref{decinterv}) are negligible. Indeed,
\begin{align*}
\mathbb{E}\left[ \left\| \frac{T_{n+1}}{n\gamma_{n}}\right\|^{2} \right] & \leq \frac{n^{2\alpha}}{c_{\gamma}n^{2}}\mathbb{E}\left[ \| Z_{n+1}-m \|^{2} \right] \\
& \leq \frac{n^{2\alpha}}{c_{\gamma}n^{2}} \frac{C'}{(n+1)^{\alpha}} \\
& \leq \frac{2^{\alpha}C'}{c_{\gamma}}\frac{1}{n^{2-\alpha}} .
\end{align*}
Since $\alpha < 1$, we have that $\frac{2-\alpha}{2}> \frac{1}{2}$. Moreover, since $0<\gamma_{k+1}^{-1}-\gamma_{k}^{-1}\leq \alpha c_{\gamma}^{-1}k^{\alpha-1}$, there is a positive constant $C_{1}$ such that:
\begin{align*}
\mathbb{E}\left[ \left\| \frac{1}{n}\sum_{k=2}^{n}T_{k}\left(  \gamma_{k}^{-1}-\gamma_{k+1}^{-1}\right) \right\| \right] & \leq \frac{\alpha c_{\gamma}^{-1}}{n}\sum_{k=2}^{n} \mathbb{E}\left[ \| T_{k} \| \right] k^{\alpha -1} \\
& \leq \frac{\alpha c_{\gamma}^{-1}\sqrt{C'}}{n}\sum_{k=2}^{n/2 -1}k^{\alpha /2 -1} \\
& \leq \frac{C_{1}}{n^{1-\alpha /2}}.
\end{align*}
Note also that since $\alpha < 1$, we have $1-\alpha /2 \geq 1/2$. Moreover, since $\| \delta_{n} \| \leq C_{m} \| T_{n} \|^{2}$, there is a positive constant $C_{2}$ such that
\begin{align*}
\mathbb{E}\left[ \left\| \frac{1}{n}\sum_{k=1}^{n} \delta_{k} \right\| \right] & \leq \frac{C_{m}}{n}\sum_{k=1}^{n}\mathbb{E}\left[ \| T_{k} \|^{2}\right] \\
& \leq \frac{C_{m}C'}{n}\sum_{k=1}^{n}k^{-\alpha} \\
& \leq C_{2}\frac{1}{n^{\alpha}} .
\end{align*}
Finally, there is a positive constant $C_{3}$ such that $\mathbb{E}\left[ \left\| \frac{T_{1}}{\gamma_{1}n}\right\| \right] \leq \frac{C_{3}}{n}$.

We now study the martingale term. Let $M$ be a constant and $\left(\sigma_{n}\right)$ be a sequence of positive real numbers defined by:
\begin{align*}
M & := 2 \geq \sup_{i} \| \xi_{i} \| , \\
\sigma_{n}^{2}&  := n \geq \sum_{k=1}^{n} \mathbb{E} \left[ \| \xi_{k}\|^{2}|\mathcal{F}_{k-1} \right] .
\end{align*}
Applying Pinelis-Bernstein's Lemma, we have for all $t>0$, 
\begin{align*}
\mathbb{P}\left( \sup_{1 \leq k \leq n} \left\| \widehat{M}_{k+1} \right\| \geq t \right) &\leq 2\exp \left[ -\frac{t^{2}}{2\left(\sigma_{n}^{2}+Mt/3 \right)}\right]. 
\end{align*}
Consequently, 
\begin{align*}
\mathbb{P}\left( \frac{\left\| \widehat{M}_{n+1}\right\|}{n}\geq t \right) & \leq \mathbb{P}\left( \sup_{1\leq k \leq n}\left\| \widehat{M}_{k+1} \right\| \geq tn \right) \\
& \leq 2\exp \left[ -\frac{t^{2}n^{2}}{2\left( \sigma_{n}^{2} + Mtn/3 \right)}\right] \\
& \leq 2 \exp \left[ -\frac{t^{2}}{2\left( \sigma_{n}^{2}/n^{2}+Mt/3n \right)}\right] \\
& \leq 2\exp \left[ -\frac{t^{2}}{2\left( \sigma_{n}'^{2}+N_{n}'t/3 \right)}\right] ,
\end{align*}
with $\sigma_{n}'^{2} := n^{-1}$ and $N_{n}':= 2 n^{-1}$. As in the proof of Theorem \ref{interv}, there are three positive constants $C_{1}'$, $C_{2}'$ and $C_{3}'$ such that for all $t>0$,
\[
 \mathbb{P}\left[ \left\| \Gamma_{m}\left( \overline{Z}_{n}-m \right) \right\| \geq t \right]  \leq 2\exp \left[ -\frac{(t/2)^{2}}{2\left( \sigma_{n}'^{2}+N_{n}'t/6 \right)}\right] + \frac{C_{1}'}{n^{1-\alpha /2}} + \frac{C_{2}'}{n^{\alpha}} + \frac{C_{3}'}{n} =: g(t,n).
\]
We search values of $t$ such that $g(t,n) \leq \delta$. We have to solve the following system of inequalities,
\begin{align*}
2 \exp \left[ -\frac{(t/2)^{2}}{2(\sigma_{n}'^{2}+N_{n}t/6 )}\right] & \leq \delta /2 \\
\frac{C_{1}'}{tn^{1-\alpha/2}} & \leq \delta /6 \\
\frac{C_{2}'}{tn^{\alpha}} & \leq \delta /6 \\
\frac{C_{3}'}{tn} & \leq \delta /6 .
\end{align*}
We get (see \cite{Tarres} , Appendix A, for the martingale term):
\begin{align*}
t & \geq 4 \left( \frac{N_{n}'}{3}+ \sigma_{n}' \right) \ln \left( \frac{4}{\delta}\right) \\
t & \geq \frac{6C_{1}'}{\delta} \frac{1}{n^{1-\alpha /2}}\\
t & \geq \frac{6C_{2}'}{\delta} \frac{1}{n^{\alpha}} \\
t & \geq \frac{6C_{3}'}{\delta} \frac{1}{n}.
\end{align*}
Since $\left( \frac{N_{n}'}{3}+\sigma_{n}' \right) = \frac{2}{3n}+ \frac{1}{\sqrt{n}}$, the other terms are negligible for $n$ large enough and we can consider a rank $n_\delta$ as in (\ref{def:ndelta}).

\bibliographystyle{apalike}
\bibliography{biblio_redaction}

\end{document}